\newtheorem{theorem}{Theorem}[section]
\newtheorem{lemma}[theorem]{Lemma}
\newtheorem{proposition}[theorem]{Proposition}
\newtheorem{corollary}[theorem]{Corollary}
\theoremstyle{definition}
\theoremstyle{remark}
\newtheorem{remark}[theorem]{Remark}
\numberwithin{equation}{section}
\begin{document}

\setcounter{page}{1}

\title[Phillips symmetric operators and their extensions
]{Phillips symmetric operators and their extensions}

\author[S. Kuzhel \MakeLowercase{and} L. Nizhnik]{Sergii Kuzhel,$^1$$^{*}$ \MakeLowercase{and} Leonid Nizhnik$^2$}

\address{$^{1}$AGH University of Science and Technology, Krak\'{o}w 30-059, Poland.}
\email{\textcolor[rgb]{0.00,0.00,0.84}{kuzhel@agh.edu.pl}}

\address{$^{2}$Institute of Mathematics  National Academy of Sciences of Ukraine, Kyiv  01-601,  Ukraine}
\email{\textcolor[rgb]{0.00,0.00,0.84}{nizhnik@imath.kiev.ua}}

\subjclass[2010]{Primary 47B25; Secondary 47A10}

\keywords{symmetric operator, characteristic function, wandering subspace, bilateral shift, Lebesgue spectrum.}

\begin{abstract}
Let $S$ be a symmetric operator with equal defect numbers and let $\mathfrak{U}$
be a set of unitary operators in a Hilbert space $\mathfrak{H}$.
The operator $S$ is called $\mathfrak{U}$-invariant if
$US=SU$ for all $U\in\mathfrak{U}$. Phillips \cite{PH} constructed an
example of $\mathfrak{U}$-invariant symmetric operator $S$  which has
no $\mathfrak{U}$-invariant self-adjoint extensions. It was discovered
that such symmetric operator has a constant characteristic function \cite{KO}.
For this reason, each symmetric operator $S$  with constant characteristic function is called
a \emph{Phillips symmetric operator}.

The paper is devoted to the investigation of self-adjoint (and, more generally, proper) extensions of a Phillips symmetric operator.
Such extensions differ from those that are commonly studied in the literature and they have a lot of curious properties.
In particular, proper extensions of a Phillips symmetric operator that have real spectrum are similar to each other.
\end{abstract} 
\maketitle

\section{Introduction}
Let $S$ be a symmetric operator with equal defect numbers and let $\mathfrak{U}$
be a family of unitary operators in a Hilbert space $\mathfrak{H}$ such that the inclusion $U\in\mathfrak{U}$ implies
$U^*\in\mathfrak{U}$. The operator $S$ is called $\mathfrak{U}$-invariant if
$S$ commutes with all $U\in\mathfrak{U}$. Does there exist at least one $\mathfrak{U}$-invariant self-adjoint extension of
$S$?  The answer is definitely affirmative if $S$  is assumed to be semibounded and the Friedrichs extension of $S$ gives the required example.

In general case of non-semibounded operators, R. Phillips constructed a symmetric operator $S$
 and a family $\mathfrak{U}$ of unitary operators commuting with $S$ such that the
$\mathfrak{U}$-invariant $S$ has no $\mathfrak{U}$-invariant self-adjoint extensions \cite[p. 382]{PH}.
Precisely, the mentioned symmetric operator $S$ acts in the Hilbert space
$l_2(\mathbb{Z})$ and it is defined as the Cayley transform
$S=i(V+I)(V-1)^{-1}$  of the isometric right shift operator
 $$
 V\{x_n\}_{n\in\mathbb{Z}}=\{x_{n+1}\}_{n\in\mathbb{Z}}, \quad  \mathcal{D}(V)=\{\{x_n\}_{n\in\mathbb{Z}}\in{l}_2(\mathbb{Z}) \ : \ x_0=0\}.
 $$
 The family $\mathfrak{U}$ consists of unitary operators $U_\theta\{x_n\}_{n\in\mathbb{Z}}=\{y_n\}_{n\in\mathbb{Z}}$ ($|\theta|=1$), where $y_n=\theta{x_n}$  $(n\in\mathbb{N})$ and $y_n=x_n$  ($n\in\mathbb{Z}\setminus\mathbb{N}$).
 The operator $S$ is $\mathfrak{U}$-invariant but there are no $\mathfrak{U}$-invariant self-adjoint extensions of $S$.

It was discovered \cite{KO} that the characteristic function of the symmetric operator
constructed in the Phillips work is a constant in the upper half-plane $\mathbb{C}_+$. This fact
can be used for the general definition of Phillips symmetric operators.  Namely, we  will say that a
symmetric operator $S$ with equal defect numbers is \emph{a Phillips symmetric operator} (PSO)
if its characteristic function is an operator-constant on $\mathbb{C}_+$.

The concept of characteristic function of a symmetric operator was firstly introduced by Shtraus
\cite{SH} and,  further,  substantially developed by Kochubei \cite{Kochubei}  on the base of boundary triplets  technique  \cite{Gor}.
Section 2 contains all necessary results about characteristic functions  which are used in the paper.

The present paper is devoted to the investigation of PSO as well as
theirs self-adjoint (and, more generally, proper\footnote{an extension $A$ of a symmetric operator $S$ is called \emph{proper}  if $S\subset{A}\subset{S^*}$}
extensions).
Such self-adjoint extensions differ from those that are commonly studied in the literature
\cite{AL} and they have a lot of curious properties.

Our original definition of PSO deals with the concept of characteristic function.
In many cases, an explicit calculation of a characteristic function is technically complicated.
For this reason, in Section 3, we establish equivalent descriptions of PSO 
 (Theorems \ref{assa89}, \ref{assa77}, Corollary \ref{bbb111})  which can be employed as independent definitions of PSO.
These results lead to the conclusion that each simple\footnote{a symmetric operator is called \emph{simple}
if its restriction to any nontrivial reducing subspace is not a self-adjoint operator} PSO coincides with
orthogonal sum of simple maximal symmetric operators having the same nonzero defect numbers in
upper $\mathbb{C}_+$ and lower $\mathbb{C}_-$ half planes.  Such kind
of decomposition means that every simple PSO  $S$ is unitary equivalent to the momentum operator
with one point interaction
$$
\textsc{S}=i\frac{d}{dx}, \qquad \mathcal{D}(\textsc{S})=\{u\in{W^1_2(\mathbb{R}, N)} : u(0)=0\}
$$
acting in the Hilbert space $L_2(\mathbb{R}, N)$,  where the dimension of the auxiliary Hilbert space  $N$
coincides with the defect number of $S$.

Section 4 is devoted to proper extensions of PSO. The main result (Theorem \ref{assa102})  states
that all proper extensions of a PSO $S$ with real spectrum are similar to each other.  In fact we can say more:  each
proper extension with real spectrum can be interpreted as self-adjoint extension of $S$  for a special choice of inner
product equivalent to the initial one.

Some properties of PSO with defect numbers $<1, 1>$  were established  in \cite{Arlin}.
In particular, analogues of Theorems \ref{assa77}, \ref{assa102},  and Corollary \ref{assa17}  were proved. 
  
In Section 5, PSO are determined as the restrictions of a given self-adjoint operator
$A$. According to Theorem \ref{assa108},
Phillips symmetric operators which can be obtained in this way are in one-to-one
correspondence with the wandering subspaces $\mathcal{L}$ of the Cayley transform $U$ of $A$.
This means that the set of restrictions of $A$ contains PSO   only in the case where
$A$  has a  reducing subspace $\mathfrak{H}_0$ such that $A_0=A\upharpoonright_{\mathcal{D}(A)\cap\mathfrak{H}_0}$
is a self-adjoint operator in $\mathfrak{H}_0$ with Lebesgue spectrum on $\mathbb{R}$.
The existence of a simple PSO  is equivalent to the  fact  that
$A$  has Lebesgue spectrum on $\mathbb{R}$ (Corollary \ref{bbb27}).

 In Section 6, examples of PSO are considered. We establish a useful (in our opinion)
 characterization of wavelets as functions from the defect subspace $\mathfrak{N}_{-i}$
of the specially chosen PSO (Proposition \ref{neww35}).

Results of Sections 4-6 show that one-point interaction of the momentum operator:
 $i\frac{d}{dx}+\alpha\delta(x-y)$
leads to self-adjoint operators with Lebesgue spectrum which are unitary equivalent to each other.
This means that one should consider more complicated  perturbations of the momentum operator
for the construction of self-adjoint operators with non-trivial spectral properties.
 In this way, self-adjoint  momentum operators
acting in two intervals were studied in \cite{pedersen1, pedersen2}.  The momentum operators
defined on oriented metric graphs were investigated in \cite{exner}.
General nonlocal point interactions for first order differential operators
were introduced and  studied in \cite{Nizhnik, Nizhnik2}.

In Section 7,  we continue investigations of \cite{Nizhnik}
and to focus on special classes of perturbations which can be  characterized as
one point interaction defined by the nonlocal potential $\gamma\in{L_2(\mathbb{R})}$.

\section{Characteristic functions of symmetric operators}\label{chap2}
Let $A$ be a linear operator acting in a
Hilbert space ${\mathfrak H}$. Its domain is denoted $\mathcal{D}(A)$,
while $A\upharpoonright_{\mathcal{D}}$ stands for the restriction of
$A$ onto a set $\mathcal{D}$.
Here and in the following we denote by $\mathbb C_+$ ($\mathbb C_-$)
the open upper (resp.\ lower) half plane.

{\bf I.}  Let $S$ be a closed symmetric densely defined operator with equal defect numbers acting in a separable Hilbert space
$\mathfrak{H}$ with inner product $(\cdot,\cdot)$ linear in the first argument.

We denote by $\mathfrak{N}_\lambda=\ker(S^*-\lambda{I})$
the defect subspaces of $S$  and consider the linear spaces
$$
\mathfrak{M}_\lambda=\mathfrak{N}_{\lambda}\dot{+}\mathfrak{N}_{\overline{\lambda}},   \qquad  \lambda\in\mathbb{C}\setminus\mathbb{R}.
$$

According to the von Neumann formulas (see, e.g., \cite{KK, Schm}) each proper extension
$A$ of $S$  is
uniquely determined by the choice of a subspace
$M\subset\mathfrak{M}_\lambda$:
\begin{equation}\label{e55}
A=S^*\upharpoonright_{\mathcal{D}(A)}, \qquad \mathcal{D}(A)=\mathcal{D}(S)\dot{+}M,
\end{equation}

Let us set $M=\mathfrak{N}_\lambda$  in \eqref{e55} and denote by
\begin{equation}\label{as908}
A_\lambda=S^*\upharpoonright_{\mathcal{D}(A_\lambda)},  \quad \mathcal{D}(A_\lambda)=\mathcal{D}(S)\dot{+}\mathfrak{N}_\lambda,   \quad \lambda\in\mathbb{C}\setminus\mathbb{R}
\end{equation}
the corresponding proper extensions of $S$.
The operators $\textsf{sign}(Im \ \lambda)A_\lambda$ are maximal
dissipative\footnote{An operator $A$ is called dissipative if $Im (Af,f)\geq{0}$ for all $f\in\mathcal{D}(A)$ and
maximal dissipative if there are no dissipative extensions of $A$.} and $A_\lambda^*=A_{\overline{\lambda}}$.
The resolvent set of every maximal dissipative operator contains $\mathbb{C}_-$. For this reason, the operator-function
\begin{equation}\label{as909}
\textsf{Sh}(\lambda)=(A_\lambda-iI)(A_\lambda+iI)^{-1}\upharpoonright_{\mathfrak{N}_i} : {\mathfrak{N}_i}\to{\mathfrak{N}_{-i}}, \quad \lambda\in\mathbb{C}_+
\end{equation}
is well-defined and it coincides with the characteristic function of symmetric operator $S$ defined by A. Shtraus \cite{SH}.

Another (equivalent) definition of $\textsf{Sh}(\cdot)$ in \cite{SH} is based on the relation
\begin{equation}\label{GDR2}
\mathcal{D}(A_\lambda)=\mathcal{D}(S)\dot{+}\mathfrak{N}_\lambda=\mathcal{D}(S)\dot{+}(I-\textsf{Sh}(\lambda))\mathfrak{N}_i, \qquad \lambda\in\mathbb{C}_+,
\end{equation}
which allows one to determine uniquely $\textsf{Sh}(\cdot)$.

The explicit construction of $\textsf{Sh}(\cdot)$ deals with the calculation of $\mathfrak{N}_\lambda$ that, sometimes, is technically complicated.
This inconvenience was overcame in \cite{Kochubei} with the use of boundary triplet
technique.  We recall \cite{KK, Oliveira} that a triplet $(\mathcal{H}, \Gamma_-, \Gamma_+)$, where $\mathcal{H}$ is an
auxiliary Hilbert space and $\Gamma_\pm$ are linear
mappings of $\mathcal{D}(S^*)$ into $\mathcal{H}$, is called a
\emph{boundary triplet of} $S^*$ if
\begin{equation}\label{assa5}
(S^*f, g)-(f, S^*g)=i[(\Gamma_{+}f,\Gamma_{+}g)_{\mathcal
H}-(\Gamma_{-}f,\Gamma_{-}g)_{\mathcal H}],  \quad  f, g\in\mathcal{D}(S^*)
\end{equation}
holds and the map $(\Gamma_-,
\Gamma_+):\mathcal{D}(S^*)\to\mathcal{H}\oplus\mathcal{H}$ is
surjective.

Let a boundary triplet $(\mathcal{H}, \Gamma_-, \Gamma_+)$ be given.
Then the domains of definition of operators $A_\lambda$  in \eqref{as908} admit the presentation
\begin{equation}\label{new1}
\mathcal{D}(A_\lambda)=\left\{f\in\mathcal{D}(S^*) : \begin{array}{cc}
\Theta(\lambda)\Gamma_+f=\Gamma_-f,  &  \lambda\in{\mathbb{C}_+} \\
\Gamma_+f=\Theta(\lambda)\Gamma_-f,  &  \lambda\in{\mathbb{C}_-}
\end{array}\right\}
\end{equation}
where $\Theta(\cdot)$ is an operator in $\mathcal{H}$.

The operator-valued function $\Theta(\cdot)$ defined on $\mathbb{C}\setminus\mathbb{R}$
is called \emph{the characteristic function of $S$ associated
with boundary triplet $(\mathcal{H}, \Gamma_-, \Gamma_+)$}.
It follows from the relation $A_\lambda^*=A_{\overline{\lambda}}$ and \eqref{assa5}
that $\Theta^*(\lambda)=\Theta(\overline{\lambda})$.

The explicit form of characteristic function depends on the choice of a boundary triplet.
However, in any case, $\Theta(\cdot)$ is a holomorphic operator-valued function whose values
are strong contractions in $\mathcal{H}$ (i.e., $\|\Theta(\lambda)\|<1$) \cite{Kochubei}.

The characteristic function determines a simple symmetric operator up to unitary equivalence. Namely, the following result
holds:
\begin{theorem}[\cite{Kochubei}]\label{assa44}
Simple symmetric operators $S_1$ and $S_2$ are unitary equivalent if and only if some of their
characteristic functions coincide.
\end{theorem}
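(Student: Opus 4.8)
The plan is to establish both directions using the representation \eqref{new1} of the extensions $A_\lambda$ in terms of the boundary triplet together with the abstract characterization of simple symmetric operators via their resolvents. For the ``only if'' direction, suppose $W: \mathfrak{H}_1 \to \mathfrak{H}_2$ is a unitary operator with $WS_1 = S_2 W$. Then $W$ maps $\mathcal{D}(S_1^*)$ onto $\mathcal{D}(S_2^*)$, intertwines $S_1^*$ and $S_2^*$, and carries the defect subspaces $\mathfrak{N}_\lambda^{(1)}$ onto $\mathfrak{N}_\lambda^{(2)}$ for every $\lambda \in \mathbb{C}\setminus\mathbb{R}$. Consequently $W$ maps $\mathcal{D}(A_\lambda^{(1)})$ onto $\mathcal{D}(A_\lambda^{(2)})$. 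Transporting the boundary triplet of $S_1^*$ through $W$ (i.e.\ setting $\widetilde\Gamma_\pm = \Gamma_\pm^{(1)} W^{-1}$) produces a boundary triplet of $S_2^*$ for which the associated characteristic function, read off from \eqref{new1}, is exactly $\Theta_1(\cdot)$. Since the characteristic function of $S_2$ is determined only up to the choice of boundary triplet, this shows that some characteristic function of $S_1$ coincides with some characteristic function of $S_2$.

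For the converse, assume $\Theta_1(\lambda) = \Theta_2(\lambda) =: \Theta(\lambda)$ for suitable boundary triplets $(\mathcal{H}, \Gamma_\pm^{(1)})$ of $S_1^*$ and $(\mathcal{H}, \Gamma_\pm^{(2)})$ of $S_2^*$ (in particular the auxiliary spaces agree, both having dimension equal to the common defect number). The strategy is to recover the resolvent of a \emph{fixed} extension from $\Theta$ and then invoke the standard fact that a simple symmetric operator is reconstructed, up to unitary equivalence, from the resolvent of any one of its canonical extensions restricted to the defect subspaces. Concretely, fix $\lambda_0 \in \mathbb{C}_+$ and consider the maximal dissipative extension $A_{\lambda_0}^{(k)}$; using \eqref{new1} one expresses $(A_{\lambda_0}^{(k)} - \mu I)^{-1}$ for $\mu \in \mathbb{C}_-$ through $\Theta(\mu)$, $\Theta(\lambda_0)$, and the $\gamma$-field built from $\Gamma_\pm^{(k)}$. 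Because $S_k$ is simple, the family $\{\mathfrak{N}_\mu^{(k)} : \mu \in \mathbb{C}\setminus\mathbb{R}\}$ is total in $\mathfrak{H}_k$, so it suffices to define $W$ on this total set by matching the analytic families of defect vectors parametrized by $\Theta$, and to check that the inner products $(\,\cdot\,,\,\cdot\,)_{\mathfrak{H}_1}$ and $(\,\cdot\,,\,\cdot\,)_{\mathfrak{H}_2}$ of such vectors agree — this is where \eqref{assa5}, which ties the inner product to the boundary forms, does the work.

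The main obstacle is the second direction, specifically verifying that the candidate map $W$ — defined initially only on linear combinations of defect vectors — is isometric. The point is that all the relevant inner products of defect vectors $f_\mu^{(k)} \in \mathfrak{N}_\mu^{(k)}$ must be computable purely from $\Theta(\cdot)$, uniformly in $k$. Equivalently one must show that the Gram matrix $\big( f_{\mu_i}^{(k)}, f_{\mu_j}^{(k)} \big)_{\mathfrak{H}_k}$ is a function of $\{\mu_i\}$ and $\Theta$ alone; this follows by writing each defect vector via its boundary data and applying the Green identity \eqref{assa5} repeatedly, but the bookkeeping with the two half-planes in \eqref{new1} and the holomorphy/contractivity of $\Theta$ needs care. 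Once isometry on the total set is secured, $W$ extends by continuity to a unitary $\mathfrak{H}_1 \to \mathfrak{H}_2$, and the intertwining $WS_1 = S_2 W$ is immediate because $W$ was built to respect the action of $S_k^*$ on defect vectors. I would cite \cite{Kochubei} for the technical lemmas on the $\gamma$-field and Weyl function if a fully self-contained argument is not desired.
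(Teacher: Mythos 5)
The paper offers no proof of Theorem \ref{assa44}: it is imported verbatim from \cite{Kochubei}, so there is nothing internal to compare your argument against. On its own merits, your sketch reproduces the standard boundary-triplet proof from that literature and is correct in outline. The ``only if'' direction (transporting the triplet by $\widetilde\Gamma_\pm=\Gamma_\pm^{(1)}W^{-1}$) is complete as stated. In the converse direction, the central claim you isolate --- that the Gram matrix of defect vectors is a function of $\Theta$ alone --- does follow from \eqref{assa5}: parametrizing $f_\mu\in\mathfrak{N}_\mu$ by its boundary datum $h=\Gamma_+f_\mu$ and using $\Gamma_-f_\mu=\Theta(\mu)\Gamma_+f_\mu$ for $\mu\in\mathbb{C}_+$ (and the reflected relation in $\mathbb{C}_-$) gives $(\mu-\overline{\nu})(f_\mu,f_\nu)=i\left[(\Gamma_+f_\mu,\Gamma_+f_\nu)-(\Gamma_-f_\mu,\Gamma_-f_\nu)\right]$, which determines $(f_\mu,f_\nu)$ except at the degenerate pairs $\nu=\overline{\mu}$, where the prefactor vanishes; you should say explicitly that those values are recovered by the (anti)holomorphy of $\nu\mapsto(f_\mu,f_\nu)$. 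The other point that is glossed over is the final intertwining: the span of defect vectors is total but does not contain $\mathcal{D}(S_1)$, so ``$WS_1=S_2W$ is immediate'' is too quick. The clean route is to first verify $W(A^{(1)}_{\lambda_0}-\mu I)^{-1}=(A^{(2)}_{\lambda_0}-\mu I)^{-1}W$ on defect vectors (hence everywhere, by simplicity), and then use $\mathcal{D}(S_k)=(A^{(k)}_{\lambda_0}-\overline{\lambda}I)^{-1}\bigl(\mathfrak{N}^{(k)}_{\lambda}\bigr)^{\perp}$ together with $W\mathfrak{N}^{(1)}_\lambda=\mathfrak{N}^{(2)}_\lambda$ to conclude that $W$ carries $\mathcal{D}(S_1)$ onto $\mathcal{D}(S_2)$. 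With those two repairs the argument closes and coincides with the proof in the cited source.
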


The Shtraus characteristic function $\textsf{Sh}(\cdot)$ defined in \eqref{as909} coincides (up to the multiplication by unitary operator)
with $\Theta(\cdot)$ for special choice of
boundary triplet.  Precisely, the simplest (inspired by the von Neumann formulas)
boundary triplets $(\mathfrak{N}_{\mu}, \Gamma_-, \Gamma_+)$
 of $S^*$  can be constructed as follows
\begin{equation}\label{sas6}
\Gamma_-f=\sqrt{2{Im \ \mu}}Vf_{\overline{\mu}}, \quad \Gamma_+{f}=\sqrt{2{Im \ \mu}}f_{\mu}, \
f=u+f_{\mu}+f_{\overline{\mu}}\in\mathcal{D}(S^*),
\end{equation}
where $\mu\in\mathbb{C}_+$  and  $V : \mathfrak{N}_{\overline{\mu}}\to\mathfrak{N}_{\mu}$ is an arbitrary unitary
mapping. Assume that $\mu=i$. Then, the characteristic function $\Theta(\cdot)$ associated with
the boundary triplet $(\mathfrak{N}_{i}, \Gamma_-, \Gamma_+)$  coincides with the function
$-V\textsf{Sh}(\cdot)$ on $\mathbb{C}_+$.

\begin{remark}\label{rem22}
There are various  approaches to the definition of boundary triplets. For instance,
\cite{Gor, Schm},  a triplet $(\mathcal{H}, \Gamma_0, \Gamma_1)$  where
$\Gamma_0$, $\Gamma_1$  are linear
mappings of $\mathcal{D}(S^*)$ into $\mathcal{H}$,  is called a
\emph{boundary triplet of} $S^*$ if the Green's identity
$$
(S^*f, g)-(f, S^*g)=(\Gamma_1f, \Gamma_0g)_{\mathcal{H}}-(\Gamma_0f,
\Gamma_1g)_{\mathcal{H}}, \quad  f, g\in\mathcal{D}(S^*)
$$
holds and the map $(\Gamma_0,
\Gamma_1):\mathcal{D}(S^*)\to\mathcal{H}\oplus\mathcal{H}$ is
surjective.

The operators $\Gamma_\pm$  in \eqref{assa5} and $\Gamma_i$ are related as follows
$\Gamma_{\pm}=\frac{1}{\sqrt{2}}(\Gamma_1{\pm}i\Gamma_0)$
and, obviously, the definitions of boundary triplets $(\mathcal{H}, \Gamma_-, \Gamma_+)$
and $(\mathcal{H}, \Gamma_0, \Gamma_1)$ are equivalent.
\end{remark}

{\bf II.} The characteristic function $\Theta(\cdot)$  admits a natural interpretation in the Krein space setting
(see \cite{AK_AK, AZ} for the basic theory of Krein spaces and terminology).
To explain this point,  we fix a boundary triplet $(\mathcal{H}, \Gamma_-, \Gamma_+)$
and rewrite \eqref{assa5} as:
\begin{equation}\label{new24}
(S^*f,g)-(f,S^*g)=i[\Psi{f},\Psi{g}],
\end{equation}
where
\begin{equation}\label{new111}
\Psi=\begin{bmatrix}
\Gamma_+ \\
\Gamma_-
\end{bmatrix}  : \mathcal{D}(S^*) \to \textsf{H}=\begin{bmatrix}
\mathcal{H} \\
\mathcal{H}
\end{bmatrix},
\end{equation}
maps $\mathcal{D}(S^*)$ into the Krein space $(\textsf{H},
[\cdot,\cdot])$ with the indefinite inner product
\begin{equation}\label{new30}
[\textsf{x},\textsf{y}]=(x_0, y_0)-(x_1, y_1), \qquad \textsf{x}=\begin{bmatrix}
x_0 \\
x_1
\end{bmatrix}, \ \textsf{y}=\begin{bmatrix}
y_0 \\
y_1
\end{bmatrix}\in\textsf{H}.
\end{equation}

It follows from the definition of boundary triplets that the mapping
$\Psi : \mathcal{D}(S^*) \to \textsf{H}$ is surjective and $\ker\Psi=\mathcal{D}(S)$.

In view of \eqref{new30}, the fundamental decomposition of the Krein space $(\textsf{H},
[\cdot,\cdot])$ coincides with
\begin{equation}\label{assa3}
\textsf{H}=\textsf{H}_+\oplus\textsf{H}_-, \qquad \textsf{H}_+=\begin{bmatrix}
\mathcal{H} \\
0
\end{bmatrix}, \quad  \textsf{H}_-=\begin{bmatrix}
0 \\
\mathcal{H}
\end{bmatrix},
\end{equation}
where $\textsf{H}_+=\Psi\ker\Gamma_-$ and $\textsf{H}_-=\Psi\ker\Gamma_+$  are respectively, maximal uniformly positive and negative subspaces
with respect to the indefinite inner product $[\cdot,\cdot]$.

By virtue of \eqref{e55}, each proper extension $A$ of $S$
is completely determined by a subspace
$\textsf{L}=\Psi\mathcal{D}(A)=\Psi(\mathcal{D}(S)\dot{+}M)=\Psi{M}$ of $\textsf{H}$.
In other words, there is a one-to-one correspondence between
subspaces of $\textsf{H}$ and proper extensions of $S$. In particular,
proper extensions $A_\lambda$ in \eqref{as908} are determined by the subspaces
$\textsf{L}_\lambda=\Psi\mathcal{D}(A_\lambda)$, which are maximal uniformly positive ($\lambda\in\mathbb{C}_+$) and
maximal uniformly negative ($\lambda\in\mathbb{C}_-$)  in $(\textsf{H}, [\cdot,\cdot])$, see \cite{HK}.

Taking \eqref{new1} and \eqref{new111} into account, we arrive at the conclusion that
the maximal uniformly positive subspace $\textsf{L}_\lambda$  is decomposed with respect to the fundamental
decomposition (\ref{assa3}):
$$
\textsf{L}_\lambda=\Psi\mathcal{D}(A_\lambda)=\left\{\begin{bmatrix}
\Gamma_+f \\
\Theta\Gamma_+f
\end{bmatrix}  : f\in\mathcal{D}(A_\lambda)\right\}=\{\textsf{h}_++\widetilde{\Theta}(\lambda)\textsf{h}_+ : \textsf{h}_+\in\textsf{H}_+\},
$$
where $\widetilde{\Theta}(\cdot) : \textsf{H}_+ \to \textsf{H}_- $  acts as follows:
\begin{equation}\label{ee2}
\widetilde{\Theta}(\lambda)\textsf{h}_+=\widetilde{\Theta}(\lambda)\begin{bmatrix}
h \\
0
\end{bmatrix}=\begin{bmatrix}
0 \\
\Theta(\lambda)h
\end{bmatrix}, \qquad \lambda\in\mathbb{C}_+.
\end{equation}
This means that
$\widetilde{\Theta}(\lambda)$ is the angular operator of the maximal uniformly positive subspace $\textsf{L}_\lambda$ with respect to the maximal uniformly positive subspace
$\textsf{H}_+$ of the fundamental decomposition \eqref{assa3}  (see \cite{AZ} for the concept of angular operators).

Self-adjoint extensions $A$ of $S$ correspond to hypermaximal neutral subspaces $\textsf{L}=\Psi\mathcal{D}(A)$ of the Krein
space $(\textsf{H}, [\cdot,\cdot])$. Each hypermaximal neutral subspace is determined uniquely by a unitary mapping between
subspaces  $\textsf{H}_+$ and $\textsf{H}_-$ of the fundamental decomposition \eqref{assa3}. This fact leads to
the conclusion that each self-adjoint extension $A$ of $S$ can be described as
$$
{A}=S^*\upharpoonright_{\mathcal{D}({A})},
\qquad \mathcal{D}({A}) =\{f\in\mathcal{D}(S^*) \ : \
\mathbf{T}\Gamma_+f=\Gamma_-f \},
$$
where $\mathbf{T}$ is a unitary operator in $\mathcal{H}$.

{\bf III.} The explicit form of characteristic function depends on the choice of boundary triplet.
Let $\Theta_i(\cdot)$  $(i=1,2)$  be characteristic functions
associated with boundary triplets $({\mathcal{H}_i, \Gamma_-^i, \Gamma_+^i})$.
Since the dimensions of the auxiliary Hilbert spaces $\mathcal{H}_i$
coincide with the defect number of $S$, without loss of generality,
we may assume that $\mathcal{H}_1=\mathcal{H}_2=\mathcal{H}$.

 It is easy to see that  the operator $K : \textsf{H} \to \textsf{H}$  defined by the formula
$$
K\begin{bmatrix}
 \Gamma_{+}^1f \\
 \Gamma_{-}^1f
\end{bmatrix}=\begin{bmatrix}
 \Gamma_{+}^2f \\
 \Gamma_{-}^2f
\end{bmatrix},   \quad f\in\mathcal{D}(S^*).
$$
is surjective in $\textsf{H}$ and, moreover, $K$  is a unitary operator in the Krein space
 $(\textsf{H}, [\cdot,\cdot])$,  i.e.
$[K\textsf{x},K\textsf{y}]=[\textsf{x},\textsf{y}]$, $\textsf{x},\textsf{y}\in\textsf{H}$  (the latter relation follows from \eqref{new24} - \eqref{new30}).
Each unitary operator $K$ in $(\textsf{H}, [\cdot,\cdot])$ determines the so-called interspherical linear fractional transformation \cite[Chapter III, section 3]{AZ}
$$
\Phi_K(Z)=(K_{21}+K_{22}Z)(K_{11}+K_{12}Z)^{-1}, \qquad K=\begin{bmatrix}
K_{11} & K_{12} \\
K_{21} &  K_{22}
\end{bmatrix},
$$
where $K_{ij}$ are operator components of decomposition of $K$  with respect to \eqref{assa3}
and a bounded linear operator $Z$ maps $\textsf{H}_+$  into $\textsf{H}_-$.
The  interspherical  transformation $\Phi_K(Z)$ is well defined for all  $Z : \textsf{H}_+\to\textsf{H}_-$
with $\|Z\|\leq{1}$  (i.e.,  $0\in\rho(K_{11}+K_{12}Z)$ )  and $\|\Phi_K(Z)\|\leq{1}$.

It is known  \cite{Kochubei, KK} that
\begin{equation}\label{assa4}
\widetilde{\Theta}_2(\lambda)=\Phi_K(\widetilde{\Theta}_1(\lambda)), \qquad \lambda\in\mathbb{C}_+,
\end{equation}
where $\widetilde{\Theta}_i(\cdot) : \textsf{H}_+\to\textsf{H}_- $ are defined similarly to
\eqref{ee2}.

\section{Phillips symmetric operator}

We say that a symmetric operator with equal nonzero defect numbers is a \emph{Phillips symmetric operator \ (PSO)} if its characteristic function
$\Theta(\cdot)$ is an operator-constant on $\mathbb{C}_+$.

By virtue of \eqref{assa4},  this definition  does not depend on the choice of boundary triplet.
However,  in many cases,  it is not easy to use it  (because one should to calculate the characteristic function).
For this reason a series of statements which can be used as
(equivalent) definitions of PSO are presented below.

\begin{theorem}\label{assa89}
A symmetric operator $S$ with equal defect numbers is a Phillips symmetric operator if and only if
\begin{equation}\label{assa8}
\mathfrak{N}_\lambda\subset\mathcal{D}(S)\dot{+}\mathfrak{N}_\mu,  \quad  \mbox{for all} \quad  \lambda, \mu\in\mathbb{C}_+.
\end{equation}
\end{theorem}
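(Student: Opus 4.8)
The plan is to translate the condition ``$\Theta(\cdot)$ is constant on $\mathbb{C}_+$'' into a statement about the domains $\mathcal{D}(A_\lambda)$ via the relation \eqref{GDR2}, and then to recognise that the right-hand side of \eqref{GDR2} is exactly $\mathcal{D}(S)\dot{+}\mathfrak{N}_\lambda$. First I would fix the ``simplest'' boundary triplet $(\mathfrak{N}_i,\Gamma_-,\Gamma_+)$ from \eqref{sas6} with $\mu=i$, so that by the remark following \eqref{sas6} the associated characteristic function equals $-V\textsf{Sh}(\lambda)$; since $V$ is a fixed unitary, $\Theta(\cdot)$ is constant on $\mathbb{C}_+$ if and only if $\textsf{Sh}(\cdot)$ is constant on $\mathbb{C}_+$. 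Thus it suffices to prove that $\textsf{Sh}(\cdot)$ is constant on $\mathbb{C}_+$ if and only if \eqref{assa8} holds.

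For the direction ``constant $\Rightarrow$ \eqref{assa8}'': if $\textsf{Sh}(\lambda)\equiv\textsf{Sh}(i)=:\textsf{Sh}$ for all $\lambda\in\mathbb{C}_+$, then by \eqref{GDR2} we have $\mathcal{D}(A_\lambda)=\mathcal{D}(S)\dot{+}(I-\textsf{Sh})\mathfrak{N}_i$, which is the \emph{same} subspace for every $\lambda\in\mathbb{C}_+$. Hence $\mathcal{D}(S)\dot{+}\mathfrak{N}_\lambda=\mathcal{D}(S)\dot{+}\mathfrak{N}_\mu$ for all $\lambda,\mu\in\mathbb{C}_+$, which in particular gives the inclusion \eqref{assa8} (and actually equality). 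For the converse, suppose \eqref{assa8} holds. Taking $\lambda$ and $\mu$ and then swapping their roles, the inclusion \eqref{assa8} upgrades to the equality $\mathcal{D}(S)\dot{+}\mathfrak{N}_\lambda=\mathcal{D}(S)\dot{+}\mathfrak{N}_\mu$ for all $\lambda,\mu\in\mathbb{C}_+$; equivalently $\mathcal{D}(A_\lambda)=\mathcal{D}(A_i)$ for all $\lambda\in\mathbb{C}_+$. Now I would invoke \eqref{GDR2} in the form $\mathcal{D}(S)\dot{+}(I-\textsf{Sh}(\lambda))\mathfrak{N}_i=\mathcal{D}(S)\dot{+}(I-\textsf{Sh}(i))\mathfrak{N}_i$ and use the uniqueness clause that follows \eqref{GDR2}: the decomposition \eqref{GDR2} determines $\textsf{Sh}(\lambda)$ uniquely from the subspace $\mathcal{D}(A_\lambda)$, so equality of these subspaces forces $\textsf{Sh}(\lambda)=\textsf{Sh}(i)$.

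The one point that needs a little care — and which I expect to be the main (mild) obstacle — is the passage from the inclusion in \eqref{assa8} to the equality of the subspaces $\mathcal{D}(A_\lambda)$. Here I would argue as follows: \eqref{assa8} with the pair $(\lambda,\mu)$ gives $\mathfrak{N}_\lambda\subset\mathcal{D}(S)\dot{+}\mathfrak{N}_\mu=\mathcal{D}(A_\mu)$, hence $\mathcal{D}(A_\lambda)=\mathcal{D}(S)\dot{+}\mathfrak{N}_\lambda\subset\mathcal{D}(A_\mu)$; symmetrically $\mathcal{D}(A_\mu)\subset\mathcal{D}(A_\lambda)$, so the two coincide. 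One should also check that the sum $\mathcal{D}(S)\dot{+}\mathfrak{N}_\mu$ remains direct after adjoining $\mathfrak{N}_\lambda$, i.e. that $\mathfrak{N}_\lambda\cap\mathcal{D}(S)=\{0\}$, but this is immediate since $S$ is symmetric and $\lambda\notin\mathbb{R}$. Finally, to close the argument cleanly I would state explicitly the elementary fact that if $(I-\textsf{Sh}_1)\mathfrak{N}_i$ and $(I-\textsf{Sh}_2)\mathfrak{N}_i$ are the \emph{same} subspace complementary to $\mathcal{D}(S)$ inside $\mathcal{D}(A_\lambda)$, and if each $I-\textsf{Sh}_j$ is injective on $\mathfrak{N}_i$ (which holds because $\|\textsf{Sh}_j\|<1$ on $\mathbb{C}_+$, so $1$ is not in the spectrum), then comparing the $\mathfrak{N}_i$- and $\mathfrak{N}_{-i}$-components of an arbitrary element $(I-\textsf{Sh}_1)g$ written in the $\mathfrak{N}_i\dot{+}\mathfrak{N}_{-i}$ decomposition forces $\textsf{Sh}_1=\textsf{Sh}_2$; this is exactly the uniqueness asserted after \eqref{GDR2} and it finishes the proof.
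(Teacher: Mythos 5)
Your proof is correct and rests on the same two ingredients as the paper's: the relation \eqref{GDR2} and the directness of the von Neumann decomposition $\mathcal{D}(S^*)=\mathcal{D}(S)\dot{+}\mathfrak{N}_i\dot{+}\mathfrak{N}_{-i}$, which is exactly what makes your uniqueness step ($(I-\textsf{Sh}_1)\mathfrak{N}_i$ and $(I-\textsf{Sh}_2)\mathfrak{N}_i$ spanning the same complement of $\mathcal{D}(S)$ forces $\textsf{Sh}_1=\textsf{Sh}_2$) and the paper's converse step work. The one genuine difference is in the forward direction: the paper first pins down the constant by showing $\textsf{Sh}(\lambda)\equiv\textsf{Sh}(i)=0$, which only yields $\mathfrak{N}_\lambda\subset\mathcal{D}(S)\dot{+}\mathfrak{N}_i$, and then needs a separate sign argument with $\mathrm{Im}\,(S^*f,f)$ to pass from $\mu=i$ to arbitrary $\mu\in\mathbb{C}_+$; your observation that constancy of $\textsf{Sh}$ makes all the domains $\mathcal{D}(A_\lambda)=\mathcal{D}(S)\dot{+}(I-\textsf{Sh})\mathfrak{N}_i$ coincide for $\lambda\in\mathbb{C}_+$ delivers \eqref{assa8} for all pairs $\lambda,\mu$ at once, so that computation is not needed. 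Both arguments are valid; yours is a little shorter, while the paper's has the side benefit of recording that the Shtraus function of a PSO is identically zero, a fact it reuses later (e.g.\ in Corollary \ref{new68}).
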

\begin{proof}
If $S$ is a PSO, then its
characteristic function $\Theta(\cdot)$ associated with the boundary triplet $(\mathfrak{N}_i, \Gamma_-, \Gamma_+)$
determined by \eqref{sas6} has to be a constant. Therefore,  the Shtraus characteristic function
$\textsf{Sh}(\lambda)$ coincides with an operator $U : \mathfrak{N}_i \to \mathfrak{N}_{-i}$ for all $\lambda\in\mathbb{C}_+$.
In particular, $\textsf{Sh}(i)=U$.  By virtue of  \eqref{GDR2} with $\lambda=i$, \ $\mathcal{D}(S)\dot{+}\mathfrak{N}_i=\mathcal{D}(S)\dot{+}(I-U)\mathfrak{N}_i$ that is
possible only for the case $U=0$. Hence, $\textsf{Sh}(\lambda)\equiv{0}$  and \eqref{GDR2} implies
that
\begin{equation}\label{ee3}
\mathfrak{N}_\lambda\subset\mathcal{D}(S)\dot{+}\mathfrak{N}_i, \qquad  \forall\lambda\in\mathbb{C}_+.
\end{equation}

Let us assume that there exists $f_i\in\mathfrak{N}_i$ and $\mu\in\mathbb{C}_+$ such
that $f_i=v+f_\mu+f_{\overline{\mu}}$,  where
$v\in\mathcal{D}(S)$ and $f_{\overline{\mu}}\in\mathfrak{N}_{\overline{\mu}}$ is non-zero.
The last equality can be transformed to $\widetilde{f}_i=\widetilde{v}+{f}_{\overline{\mu}}$
 with the use of \eqref{ee3}.  However, the obtained relation is impossible because
 $Im \ (S^*\widetilde{f}_i, \widetilde{f}_i)=\|\widetilde{f}_i\|^2>0$  and, simultaneously,
 $$
 Im \ (S^*\widetilde{f}_i, \widetilde{f}_i)=Im \ (S^*(\widetilde{v}+{f}_{\overline{\mu}}), \widetilde{v}+{f}_{\overline{\mu}})=Im \ (S^*{f}_{\overline{\mu}}, {f}_{\overline{\mu}})=-(Im \ \mu)\|{f}_{\overline{\mu}}\|^2<0.
 $$
Therefore, $f_i=v+f_\mu$ and  $\mathfrak{N}_i\subset\mathcal{D}(S)\dot{+}\mathfrak{N}_\mu$ for all $\mu\in\mathbb{C}$.
The obtained inequality and \eqref{ee3} justify \eqref{assa8}.

Conversely, if \eqref{assa8} holds, then, due to \eqref{GDR2}, $\mathcal{D}(S)\dot{+}(I-\textsf{Sh}(\lambda))\mathfrak{N}_i\subset\mathcal{D}(S)\dot{+}\mathfrak{N}_i$
that is possible only for $\textsf{Sh}(\lambda)\equiv{0}$.
\end{proof}

\begin{remark} The inclusion \eqref{assa8} and its dual counterpart in $\mathbb{C}_-$:
\begin{equation}\label{assa8b}
\mathfrak{N}_\nu\subset\mathcal{D}(S)\dot{+}\mathfrak{N}_{\xi},  \quad  \mbox{for all} \quad  \nu, \xi \in\mathbb{C}_-.
\end{equation}
are equivalent. Indeed, \eqref{assa8} means that the maximal dissipative operators $A_\lambda$ in \eqref{as908} do not depend on the choice of
$\lambda\in\mathbb{C}_+$, i.e., $A_\lambda\equiv{A}_+$. Therefore, theirs adjoint
$A_\lambda^*=A^*_\mu=A_{\nu}=A_\xi=A_+^*$   ($\nu=\overline{\lambda}$, $\xi=\overline{\mu}$)  also do not depend on $\nu,   \xi \in\mathbb{C}_-$.
This fact justifies \eqref{assa8b}.
\end{remark}

\begin{corollary}\label{new68}
Simple Phillips  symmetric operators with the same defect numbers are unitary equivalent.
\end{corollary}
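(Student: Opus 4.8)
The plan is to reduce the statement to Theorem~\ref{assa44}, which says that two simple symmetric operators are unitarily equivalent as soon as \emph{some} of their characteristic functions coincide. The bulk of the needed computation has in fact already been carried out inside the proof of Theorem~\ref{assa89}: there it is shown that a symmetric operator $S$ is a PSO if and only if its Shtraus characteristic function vanishes identically, $\textsf{Sh}(\lambda)\equiv 0$ on $\mathbb{C}_+$, and hence the characteristic function $\Theta(\cdot)=-V\textsf{Sh}(\cdot)$ associated with the von Neumann boundary triplet $(\mathfrak{N}_i,\Gamma_-,\Gamma_+)$ of \eqref{sas6} is the zero operator on $\mathfrak{N}_i$ for every $\lambda\in\mathbb{C}_+$.

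Concretely, I would take two simple Phillips symmetric operators $S_1$ and $S_2$ with a common defect number $n$, acting in Hilbert spaces $\mathfrak{H}_1$ and $\mathfrak{H}_2$. For each $S_j$ I fix the boundary triplet $(\mathfrak{N}^{(j)}_i,\Gamma^j_-,\Gamma^j_+)$ of the form \eqref{sas6}; by the remark above, the associated characteristic function $\Theta_j(\cdot)$ is identically zero on $\mathbb{C}_+$, regarded as an operator on the auxiliary space $\mathfrak{N}^{(j)}_i$. Since $\dim\mathfrak{N}^{(1)}_i=\dim\mathfrak{N}^{(2)}_i=n$, I identify $\mathfrak{N}^{(1)}_i$ and $\mathfrak{N}^{(2)}_i$ with one fixed auxiliary Hilbert space $\mathcal{H}$ by an arbitrary unitary map; this is exactly the normalization of auxiliary spaces described in part~III of Section~\ref{chap2}, and it only alters $\Theta_j$ through an interspherical transformation \eqref{assa4}, which sends the zero function to the zero function. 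Under this identification $\Theta_1(\lambda)=0=\Theta_2(\lambda)$ for all $\lambda\in\mathbb{C}_+$, so $S_1$ and $S_2$ possess a common characteristic function, and Theorem~\ref{assa44} yields the desired unitary equivalence.

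I do not anticipate a genuine obstacle here: the argument is essentially a corollary of the computation in Theorem~\ref{assa89} together with Theorem~\ref{assa44}. The only point that deserves a line of care is the bookkeeping of auxiliary spaces — one must make sure that ``the characteristic functions coincide'' is meant for one and the same $\mathcal{H}$, so that the a~priori different concrete realizations $\mathfrak{N}^{(j)}_i$ do not spoil the comparison; this is handled, as indicated, by the interspherical transformation \eqref{assa4}, which is harmless because both functions are $0$.
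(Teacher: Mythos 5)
Your proposal is correct and follows essentially the same route as the paper: both deduce from the proof of Theorem~\ref{assa89} that the Shtraus characteristic function of a PSO vanishes identically, so the characteristic function associated with the boundary triplet \eqref{sas6} is the zero operator on an auxiliary space of the common dimension, and then invoke Theorem~\ref{assa44}. Your extra remark about normalizing the auxiliary spaces via \eqref{assa4} only makes explicit a step the paper leaves implicit.
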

\begin{proof}
Let $S$ be a PSO with defect numbers $<m,m>$.
It follows from the proof of Theorem \ref{assa89} that the Shtraus characteristic function $\textsf{Sh}(\cdot)$ of $S$  coincides with the zero operator.
Therefore,  the characteristic function of $S$ calculated in terms of boundary triplet $(\mathfrak{N}_i, \Gamma_-, \Gamma_+)$ (see \eqref{sas6})
is also zero operator  acting in the auxiliary space with the dimension $m$.  Applying now
theorem \ref{assa44} for the case of simple Phillips symmetric operators with the same defect numbers,
we complete the proof.
\end{proof}

\begin{theorem}\label{assa77}
A  symmetric operator $S$ with equal defect numbers is a Phillips symmetric operator if and only if
its defect subspaces $\mathfrak{N}_\lambda$ and
$\mathfrak{N}_\nu$ are mutually orthogonal for any $\lambda\in\mathbb{C}_+$ and $\nu\in\mathbb{C}_-$.
\end{theorem}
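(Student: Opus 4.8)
The plan is to connect the orthogonality of defect subspaces with the characteristic function $\Theta(\cdot)$ via the Green identity \eqref{assa5}. Fixing a boundary triplet $(\mathcal{H},\Gamma_-,\Gamma_+)$ of $S^*$, I would take $\lambda\in\mathbb{C}_+$, $\nu\in\mathbb{C}_-$ and vectors $f_\lambda\in\mathfrak{N}_\lambda$, $f_\nu\in\mathfrak{N}_\nu$. Since $\mathfrak{N}_\lambda\subset\mathcal{D}(A_\lambda)$ and $\mathfrak{N}_\nu\subset\mathcal{D}(A_\nu)$, relation \eqref{new1} gives $\Gamma_-f_\lambda=\Theta(\lambda)\Gamma_+f_\lambda$ and $\Gamma_+f_\nu=\Theta(\nu)\Gamma_-f_\nu$. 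Substituting $S^*f_\lambda=\lambda f_\lambda$ and $S^*f_\nu=\nu f_\nu$ into \eqref{assa5}, using $\Theta^*(\nu)=\Theta(\overline{\nu})$, and recalling that $(\cdot,\cdot)$ is linear in the first and conjugate linear in the second argument, I obtain
\begin{equation}\label{eqn:pso-ortho}
(\lambda-\overline{\nu})(f_\lambda,f_\nu)=i\bigl([\Theta(\overline{\nu})-\Theta(\lambda)]\Gamma_+f_\lambda,\ \Gamma_-f_\nu\bigr)_{\mathcal{H}},\qquad \lambda\in\mathbb{C}_+,\ \nu\in\mathbb{C}_-.
\end{equation}
Since $\overline{\nu}\in\mathbb{C}_+$, both arguments of $\Theta$ on the right-hand side lie in $\mathbb{C}_+$; this identity is the engine of the proof.

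Assume first that $S$ is a PSO, so that $\Theta(\cdot)\equiv\Theta_0$ on $\mathbb{C}_+$. Then the right-hand side of \eqref{eqn:pso-ortho} vanishes, hence $(\lambda-\overline{\nu})(f_\lambda,f_\nu)=0$, which yields $\mathfrak{N}_\lambda\perp\mathfrak{N}_\nu$ whenever $\nu\neq\overline{\lambda}$. The remaining case $\nu=\overline{\lambda}$ I would settle by continuity: fix a self-adjoint extension $\widetilde{A}$ of $S$ (it exists since $S$ has equal defect numbers) and recall the standard parametrization $\mathfrak{N}_\nu=(I+(\nu-\nu_0)(\widetilde{A}-\nu I)^{-1})\mathfrak{N}_{\nu_0}$ for $\nu,\nu_0\in\mathbb{C}_-$, which shows that $\nu\mapsto\mathfrak{N}_\nu$ is analytic, in particular continuous, on $\mathbb{C}_-$. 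Thus every $g\in\mathfrak{N}_{\overline{\lambda}}$ can be written as $g=\lim_{\nu\to\overline{\lambda}}g_\nu$ with $g_\nu\in\mathfrak{N}_\nu$; since $(f_\lambda,g_\nu)=0$ for $\nu\neq\overline{\lambda}$, passing to the limit gives $(f_\lambda,g)=0$. Hence $\mathfrak{N}_\lambda\perp\mathfrak{N}_\nu$ for all $\lambda\in\mathbb{C}_+$ and $\nu\in\mathbb{C}_-$.

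Conversely, assume that $\mathfrak{N}_\lambda\perp\mathfrak{N}_\nu$ for all $\lambda\in\mathbb{C}_+$ and $\nu\in\mathbb{C}_-$. Then the left-hand side of \eqref{eqn:pso-ortho} is zero, so $\bigl([\Theta(\overline{\nu})-\Theta(\lambda)]\Gamma_+f_\lambda,\Gamma_-f_\nu\bigr)_{\mathcal{H}}=0$ for all $f_\lambda\in\mathfrak{N}_\lambda$ and $f_\nu\in\mathfrak{N}_\nu$. Here $\Gamma_+$ maps $\mathfrak{N}_\lambda$ onto $\mathcal{H}$: given $h\in\mathcal{H}$, surjectivity of $(\Gamma_-,\Gamma_+)$ provides $f\in\mathcal{D}(S^*)$ with $\Gamma_+f=h$ and $\Gamma_-f=\Theta(\lambda)h$; then $f\in\mathcal{D}(A_\lambda)$ by \eqref{new1}, and writing $f=u+f_\lambda$ with $u\in\mathcal{D}(S)$ and $f_\lambda\in\mathfrak{N}_\lambda$ one gets $\Gamma_+f_\lambda=h$. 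Dually, $\Gamma_-$ maps $\mathfrak{N}_\nu$ onto $\mathcal{H}$ for $\nu\in\mathbb{C}_-$. Consequently $\Theta(\overline{\nu})=\Theta(\lambda)$ for all $\lambda\in\mathbb{C}_+$ and $\nu\in\mathbb{C}_-$; letting $\lambda$ and $\overline{\nu}$ range over $\mathbb{C}_+$ shows that $\Theta(\cdot)$ is an operator-constant on $\mathbb{C}_+$, i.e.\ $S$ is a PSO.

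The step I expect to be the main obstacle is the borderline case $\nu=\overline{\lambda}$ in the first implication: there \eqref{eqn:pso-ortho} degenerates, since both sides vanish identically, and therefore carries no information, so one genuinely needs the extra ingredient of continuity of the family $\nu\mapsto\mathfrak{N}_\nu$. Everything else reduces to the computation behind \eqref{eqn:pso-ortho} together with the structural facts about boundary triplets recorded in Section \ref{chap2}.
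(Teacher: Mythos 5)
Your argument is correct, and it takes a genuinely different route from the paper's. You work directly from the definition of a PSO via the characteristic function: fixing a boundary triplet, you combine the Green identity \eqref{assa5} with the boundary conditions \eqref{new1} satisfied by defect vectors to obtain the identity $(\lambda-\overline{\nu})(f_\lambda,f_\nu)=i\bigl([\Theta(\overline{\nu})-\Theta(\lambda)]\Gamma_+f_\lambda,\Gamma_-f_\nu\bigr)_{\mathcal H}$, which I have checked (including the use of $\Theta^*(\nu)=\Theta(\overline{\nu})$ and the conjugate-linearity in the second argument), and from which both implications fall out once one knows that $\Gamma_+$ maps $\mathfrak{N}_\lambda$ onto $\mathcal H$ and $\Gamma_-$ maps $\mathfrak{N}_\nu$ onto $\mathcal H$ --- a fact you justify correctly via the surjectivity of $(\Gamma_-,\Gamma_+)$ and $\ker\Gamma_\pm\supset\mathcal{D}(S)$. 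The paper instead leans on its Theorem \ref{assa89}: for the forward direction it writes $f_\lambda=u+f_\mu$ using the inclusion \eqref{assa8} and deduces $\mathfrak{N}_\mu\subset\mathcal{R}(S-\lambda I)$, hence $\mathfrak{N}_\mu\perp\mathfrak{N}_{\overline\lambda}$; for the converse it assumes a nonzero component $f_{\overline\mu}$ in the von Neumann decomposition of $f_\lambda$ and derives a contradiction from the orthogonality hypothesis, then invokes Theorem \ref{assa89} again. Interestingly, both proofs hit the same borderline case ($\nu=\overline{\lambda}$, where your identity degenerates to $0=0$) and both resolve it by the same kind of limiting argument --- the paper lets $\lambda\to\mu$ along $f_\lambda=(\lambda-\mu)(S-\lambda I)^{-1}f_\mu+f_\mu$, while you let $\nu\to\overline{\lambda}$ using the resolvent of a self-adjoint extension; either works. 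What your approach buys is independence from Theorem \ref{assa89} and an explicit quantitative formula tying the inner product of defect vectors to the variation of $\Theta$, which makes the equivalence transparent; what the paper's approach buys is that it stays entirely within the elementary von Neumann-formula framework and reuses machinery already established.
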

\begin{proof} Let  $S$  be a PSO and let $\lambda, \mu \in\mathbb{C}_+$, $\lambda\not=\mu$.
By virtue of \eqref{assa8}, $f_\lambda=u+f_\mu$, where $f_z\in\mathfrak{N}_z$ and $u\in\mathcal{D}(S)$. Therefore,
\begin{equation}\label{ee4}
0=(S^*-\lambda{I})f_\lambda=(S-\lambda{I})u+(\mu-\lambda)f_\mu.
\end{equation}
The obtained relation means that $\mathfrak{N}_\mu\subset\mathcal{R}(S-\lambda{I})$ and, hence $\mathfrak{N}_\mu\perp\mathfrak{N}_{\overline{\lambda}}$.
To prove the orthogonality of $\mathfrak{N}_\mu$ and $\mathfrak{N}_{\overline{\mu}}$  we use again \eqref{ee4} in order to rewrite $f_\lambda=u+f_\mu$ as follows:
$f_\lambda=(\lambda-\mu)(S-\lambda{I})^{-1}f_\mu+f_\mu$.  Let $f_\mu$ be fixed and $\lambda\to\mu$. Then $f_\lambda\to{f}_\mu$ due to the last formula for $f_\lambda$.
Then,
$$
  (f_\mu, f_{\overline{\mu}})=\lim_{\lambda\to\mu}(f_\lambda, f_{\overline{\mu}})=0, \qquad \forall{f_\mu}\in\mathfrak{N}_\mu, \  f_{\overline{\mu}}\in\mathfrak{N}_{\overline{\mu}}.
$$

Conversely, let $\mathfrak{N}_\lambda\perp\mathfrak{N}_\nu$ for any $\lambda\in\mathbb{C}_+$,  $\nu\in\mathbb{C}_-$.
 Assume now that the relation \eqref{assa8} is not true for some  $\mu\in\mathbb{C}_+$. Then there exists $f_\lambda\in\mathfrak{N}_\lambda$
such that $f_\lambda=u+f_\mu+f_{\overline{\mu}}$, where $f_{\overline{\mu}}\not=0$.  Then
$$
(\lambda-\mu)f_\lambda=(S^*-\mu{I})f_\lambda=(S-\mu{I})u+(\overline{\mu}-\mu)f_{\overline{\mu}}.
$$
Due to our assumption $\mathfrak{N}_\lambda\perp\mathfrak{N}_{\overline{\mu}}$ ($\nu=\overline{\mu}$).   Therefore, for any $\gamma_{\overline{\mu}}\in\mathfrak{N}_{\overline{\mu}}$,
$$
0=(\lambda-\mu)(f_\lambda, \gamma_{\overline{\mu}})=(u, (S^*-\overline{\mu}I)\gamma_{\overline{\mu}})+ (\overline{\mu}-\mu)(f_{\overline{\mu}}, \gamma_{\overline{\mu}})=(\overline{\mu}-\mu)(f_{\overline{\mu}}, \gamma_{\overline{\mu}}).
$$
That is possible only for $f_{\overline{\mu}}=0$.  Therefore, the defect subspaces of $S$ satisfy \eqref{assa8} and $S$
is a Phillips symmetric operator.
 \end{proof}
\begin{remark}\label{assa33}
Theorem \ref{assa77} holds true if the weaker condition of orthogonality $\mathfrak{N}_\lambda\perp\mathfrak{N}_{-i}$
will be used instead of  $\mathfrak{N}_\lambda\perp\mathfrak{N}_{\nu}$.   Indeed,  since the inequalities
\eqref{assa8} and \eqref{ee3} are equivalent (see the proof of Theorem \ref{assa89}),  it suffices
to verify that  $\mathfrak{N}_\lambda\perp\mathfrak{N}_{-i}$ implies  \eqref{ee3}.  The required
implication is obtained by repeating the end part of the proof of Theorem \ref{assa77} with $\mu=i$.
\end{remark}
\begin{corollary}\label{bbb111}
A  symmetric operator $S$ in $\mathfrak{H}$ with equal defect numbers $<m,m>$
is a Phillips symmetric operator if and only if  the Hilbert space $\mathfrak{H}$ can be decomposed into the orthogonal sum
$\mathfrak{H}=\mathfrak{H}_1\oplus\mathfrak{H}_2\oplus\mathfrak{H}_3$  of Hilbert spaces $\mathfrak{H}_j$  leaving $S$ invariant
and such that
\begin{equation}\label{assa10}
S=S_1\oplus{S_2}\oplus{S_3},  \qquad S_j=S\upharpoonright_{\mathfrak{H}_j},
\end{equation}
where  $S_1$ and $S_2$ are simple maximal symmetric operators in
 ${\mathfrak H}_1$ and ${\mathfrak H}_2$ with defect numbers  $<m, 0>$ and $<0, m>$,
 respectively and  $S_3$  is a self-adjoint operator in ${\mathfrak H}_3$.
\end{corollary}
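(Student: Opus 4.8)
The plan is to obtain both implications from Theorem~\ref{assa77} (orthogonality of the defect subspaces $\mathfrak{N}_\lambda$, $\lambda\in\mathbb{C}_+$, and $\mathfrak{N}_\nu$, $\nu\in\mathbb{C}_-$) together with the canonical (von Neumann) splitting of a closed symmetric operator into a simple part and a self-adjoint part. The \emph{sufficiency} is a short computation: if $\mathfrak{H}=\mathfrak{H}_1\oplus\mathfrak{H}_2\oplus\mathfrak{H}_3$ and $S=S_1\oplus S_2\oplus S_3$ are as stated, then $\mathfrak{N}_z(S)=\mathfrak{N}_z(S_1)\oplus\mathfrak{N}_z(S_2)\oplus\mathfrak{N}_z(S_3)$ for $z\in\mathbb{C}\setminus\mathbb{R}$, with $\mathfrak{N}_z(S_3)=0$ ($S_3$ self-adjoint), $\mathfrak{N}_\nu(S_1)=0$ for $\nu\in\mathbb{C}_-$ (defect numbers $<m,0>$) and $\mathfrak{N}_\lambda(S_2)=0$ for $\lambda\in\mathbb{C}_+$ (defect numbers $<0,m>$). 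Hence $\mathfrak{N}_\lambda(S)\subseteq\mathfrak{H}_1$ for $\lambda\in\mathbb{C}_+$, $\mathfrak{N}_\nu(S)\subseteq\mathfrak{H}_2$ for $\nu\in\mathbb{C}_-$, so $\mathfrak{N}_\lambda\perp\mathfrak{N}_\nu$; the defect numbers of $S$ are $<m,m>$, and $S$ is a Phillips symmetric operator by Theorem~\ref{assa77}.

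For the \emph{necessity}, let $S$ be a PSO with defect numbers $<m,m>$. First I would split off the self-adjoint part, writing $S=S_0\oplus S_3$ with $S_3=S\upharpoonright_{\mathfrak{H}_3}$ self-adjoint on $\mathfrak{H}_3:=\mathfrak{H}\ominus\overline{\operatorname{span}}\{\mathfrak{N}_z:z\in\mathbb{C}\setminus\mathbb{R}\}$ and $S_0=S\upharpoonright_{\mathfrak{H}_0}$ simple on $\mathfrak{H}_0=\overline{\operatorname{span}}\{\mathfrak{N}_z:z\in\mathbb{C}\setminus\mathbb{R}\}$; since the defect subspaces of $S$ and $S_0$ coincide, $S_0$ is again a PSO with defect numbers $<m,m>$. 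Now put $\mathfrak{H}_1:=\overline{\operatorname{span}}\{\mathfrak{N}_\lambda:\lambda\in\mathbb{C}_+\}$ and $\mathfrak{H}_2:=\overline{\operatorname{span}}\{\mathfrak{N}_\nu:\nu\in\mathbb{C}_-\}$. By Theorem~\ref{assa77} these are orthogonal, and by simplicity of $S_0$ the family $\{\mathfrak{N}_z\}$ spans $\mathfrak{H}_0$, so $\mathfrak{H}_0=\mathfrak{H}_1\oplus\mathfrak{H}_2$. The whole task then reduces to showing that $\mathfrak{H}_1$ and $\mathfrak{H}_2$ reduce $S_0$.

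To do this I would pass to the maximal dissipative/accumulative extensions. By Theorem~\ref{assa89} the operators $A_\lambda$, $\lambda\in\mathbb{C}_+$, coincide; call this common maximal dissipative extension $A_+$, put $A_-:=A_+^*=A_\nu$ ($\nu\in\mathbb{C}_-$), so that $\mathcal{D}(S_0)=\mathcal{D}(A_+)\cap\mathcal{D}(A_-)$. Since $\mathbb{C}_-\subseteq\rho(A_+)$, the contraction $R:=(A_++iI)^{-1}$ and its adjoint $R^*=(A_--iI)^{-1}=:\widetilde R$ are everywhere defined. Each $\mathfrak{N}_\lambda$ ($\lambda\in\mathbb{C}_+$) is the $\lambda$-eigenspace of $A_+$, hence $R\mathfrak{N}_\lambda=(\lambda+i)^{-1}\mathfrak{N}_\lambda$ and $R\mathfrak{H}_1\subseteq\mathfrak{H}_1$; symmetrically $\widetilde R\mathfrak{N}_\nu=(\nu-i)^{-1}\mathfrak{N}_\nu$ gives $\widetilde R\mathfrak{H}_2\subseteq\mathfrak{H}_2$. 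The main obstacle is the two \emph{cross} inclusions $\widetilde R\mathfrak{H}_1\subseteq\mathfrak{H}_1$ and $R\mathfrak{H}_2\subseteq\mathfrak{H}_2$. For the first: fix $f_\lambda\in\mathfrak{N}_\lambda$, $\lambda\in\mathbb{C}_+$, and write $\widetilde R f_\lambda=u+g_{-i}$ with $u\in\mathcal{D}(S_0)$, $g_{-i}\in\mathfrak{N}_{-i}$; then $(S_0-iI)u=f_\lambda+2i\,g_{-i}$, and pairing this against $g_{-i}$ and using $f_\lambda\perp\mathfrak{N}_{-i}$ (Theorem~\ref{assa77}) together with $\bigl((S_0-iI)u,g_{-i}\bigr)=(u,S_0^{*}g_{-i})-i(u,g_{-i})=0$ forces $\|g_{-i}\|^2=0$, i.e.\ $g_{-i}=0$. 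Thus $\widetilde R f_\lambda=u\in\mathcal{D}(S_0)$ with $(S_0-iI)u=f_\lambda$; pairing the latter with an arbitrary $\phi_\nu\in\mathfrak{N}_\nu$, $\nu\in\mathbb{C}_-\setminus\{-i\}$, yields $(\bar\nu-i)(u,\phi_\nu)=(f_\lambda,\phi_\nu)=0$, so $u\perp\mathfrak{N}_\nu$, and a density argument (the span of these $\mathfrak{N}_\nu$ is still $\mathfrak{H}_2$) gives $u\in\mathfrak{H}_1$. The inclusion $R\mathfrak{H}_2\subseteq\mathfrak{H}_2$ follows by the mirror-image computation.

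Finally, the four inclusions together with $R^*=\widetilde R$ show that $R$ and $\widetilde R$ commute with the orthogonal projections onto $\mathfrak{H}_1$ and $\mathfrak{H}_2$; hence $\mathfrak{H}_1,\mathfrak{H}_2$ reduce $A_+$ and $A_-$, and therefore reduce $S_0=A_+\cap A_-$. Writing $S_0^{(j)}:=S_0\upharpoonright_{\mathfrak{H}_j}$, from $\mathfrak{N}_\lambda(S_0)\subseteq\mathfrak{H}_1$ ($\lambda\in\mathbb{C}_+$) and $\mathfrak{N}_\nu(S_0)\subseteq\mathfrak{H}_2$ ($\nu\in\mathbb{C}_-$) one reads off that $S_0^{(1)}$ has defect numbers $<m,0>$ and $S_0^{(2)}$ has defect numbers $<0,m>$, so both are maximal symmetric; and each is simple, since a nontrivial reducing subspace on which $S_0^{(j)}$ were self-adjoint would be such a subspace for $S_0$. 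Combining with $S=S_0\oplus S_3$ yields $S=S_0^{(1)}\oplus S_0^{(2)}\oplus S_3$, which is the asserted decomposition. I expect the cross-inclusion step — verifying that the resolvents of $A_\pm$ respect the splitting $\mathfrak{H}_0=\mathfrak{H}_1\oplus\mathfrak{H}_2$, where the orthogonality of Theorem~\ref{assa77} is used in an essential way — to be the only genuine difficulty; the rest is bookkeeping.
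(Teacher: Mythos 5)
Your proof is correct, and the necessity direction takes a genuinely different route from the paper's. The sufficiency argument and the preliminary reduction (splitting off the self-adjoint part, setting $\mathfrak{H}_1$, $\mathfrak{H}_2$ equal to the closed spans of the defect subspaces from $\mathbb{C}_+$ and $\mathbb{C}_-$, and invoking Theorem~\ref{assa77} for their orthogonality) coincide with the paper. But for the remaining step --- showing that $\mathfrak{H}_1$ and $\mathfrak{H}_2$ actually reduce the simple part and that the restrictions are maximal symmetric --- the paper does not argue abstractly at all: it invokes Corollary~\ref{new68} (unitary equivalence of simple PSO with equal defect numbers, which rests on Kochubei's uniqueness theorem for characteristic functions, Theorem~\ref{assa44}) to replace $S_\alpha$ by the concrete model $\textsc{S}=i\,d/dx$ on $L_2(\mathbb{R},N)$ with $u(0)=0$ from \eqref{assa7}, where $\mathfrak{H}_1=L_2(\mathbb{R}_-,N)$, $\mathfrak{H}_2=L_2(\mathbb{R}_+,N)$ and the decomposition is read off explicitly. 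You instead prove the reduction intrinsically, via the resolvents $R=(A_++iI)^{-1}$ and $R^*=(A_--iI)^{-1}$ of the common maximal dissipative extension and its adjoint; the cross-inclusion computation (killing the $\mathfrak{N}_{-i}$-component of $R^*f_\lambda$ by pairing against $g_{-i}$) is the genuinely new ingredient, and it checks out. What each approach buys: yours is self-contained Hilbert-space geometry, independent of the characteristic-function machinery and of Corollary~\ref{new68}; the paper's is shorter given that corollary and simultaneously delivers the explicit model used in Remark~\ref{bbb21}. One small point you should justify rather than assert: the parenthetical claim that $\overline{\operatorname{span}}\{\mathfrak{N}_\nu:\nu\in\mathbb{C}_-\setminus\{-i\}\}$ still equals $\mathfrak{H}_2$. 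It is true --- the identity $f_\xi=(\xi+i)(S-\xi I)^{-1}f_{-i}+f_{-i}$, together with the bound $\|(S-\xi I)^{-1}\|\le 1/|\operatorname{Im}\xi|$ on $\mathcal{R}(S-\xi I)$, gives $f_\xi\to f_{-i}$ as $\xi\to -i$, exactly as in the paper's proof of Theorem~\ref{assa77} --- but as written it is the one unproved step in your argument.
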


\begin{proof}  If $S$ has the decomposition \eqref{assa10},  then
its defect subspaces $\mathfrak{N}_\lambda$  $(\lambda\in\mathbb{C}_+)$  coincide with
the defect subspaces  $\mathfrak{N}_\lambda(S_1)$ of the operator $S_1$
(since $\mathfrak{N}_\lambda(S_2)=\{0\}$ due to the defect numbers $<0,m>$ of $S_2$) and therefore,
$\mathfrak{N}_\lambda\subset\mathfrak{H}_1$.  Similarly,  the defect numbers  $<m, 0>$  of  $S_1$
mean that $\mathfrak{N}_\nu=\mathfrak{N}_\nu(S_2)\subset\mathfrak{H}_2$ for all $\nu\in\mathbb{C}_-$.
Therefore,  $\mathfrak{N}_\lambda\perp\mathfrak{N}_\nu$ and $S$ is a Phillips symmetric operator
due to Theorem \ref{assa77}.

Conversely, each symmetric operator $S$ with equal defect numbers is reduced by the decomposition
\begin{equation}\label{es121}
\mathfrak{H}=\mathfrak{H}_{\alpha}\oplus\mathfrak{H}_{3}, \qquad \mathfrak{H}_{3}=\bigcap_{\forall\mu\in\mathbb{C}_-\cup\mathbb{C}_+}\mathcal{R}(S-\mu{I}),
\end{equation}
where $\mathfrak{H}_{3}$ is the maximal invariant
subspace for $S$ on which the operator $S_{3}=S\upharpoonright_{\mathfrak{H}_{3}}$ is self-adjoint,  while
the subspace $\mathfrak{H}_{\alpha}$ coincides with the closed linear span of all $\ker(S^*-\mu{I})$ and the restriction
$S_{\alpha}=S\upharpoonright_{\mathfrak{H}_{\alpha}}$ gives rise to a simple symmetric operator in $\mathfrak{H}_{\alpha}$ with defect numbers $<m,m>$ \cite[p.9]{GG}.

Assume now that $S$ is a PSO, then its simple counterpart  $S_\alpha$ is also PSO
and $\mathfrak{N}_\mu=\ker(S^*-\mu{I})=\mathfrak{N}_{\mu}(S_\alpha)=\ker(S^*_{\alpha}-\mu{I})$  for all  $\mu\in\mathbb{C}_-\cup\mathbb{C}_+$.
According to Theorem \ref{assa77},  $\mathfrak{N}_{\lambda}(S_\alpha)\perp\mathfrak{N}_{\nu}(S_\alpha)$
($\lambda\in\mathbb{C}_+, \ \nu\in\mathbb{C}_-$).  Therefore, we can decompose  $\mathfrak{H}_{\alpha}=\mathfrak{H}_{1}\oplus\mathfrak{H}_{2}$,
where $\mathfrak{H}_{1}$  and  $\mathfrak{H}_{2}$ coincide with the closed linear spans of defect subspaces $\{\mathfrak{N}_\mu\}_{\mu\in\mathbb{C}_+}$
and defect subspaces  $\{\mathfrak{N}_\nu\}_{\nu\in\mathbb{C}_-}$,  respectively.

To complete the proof we should verify that $S_\alpha=S_1\oplus{S_2}$, where $S_j=S_\alpha\upharpoonright_{\mathfrak{H}_{j}}$ ($j=1,2$)  are maximal symmetric operators in
$\mathfrak{H}_j$ with defect numbers  $<m, 0>$ and $<0,m>$, respectively. To that end we consider
a simple symmetric operator
\begin{equation}\label{assa7}
\textsc{S}=i\frac{d}{dx}, \qquad \mathcal{D}(\textsc{S})=\{u\in{W^1_2(\mathbb{R}, N)} : u(0)=0\}
\end{equation}
acting in the Hilbert space $L_2(\mathbb{R}, N)$, where $N$ is an auxiliary Hilbert space with the dimension $m$.
It is easy to see that the defect subspaces $\mathfrak{N}_\mu$,  $\mathfrak{N}_{\nu}$  ($\mu\in\mathbb{C}_+$,  $\nu\in\mathbb{C}_-$  )  are formed, respectively, by the functions
\begin{equation}\label{neww45}
\chi_{{\mathbb{R}_-}}(x)e^{-i\mu{x}}n,  \qquad \chi_{\mathbb{R}_+}(x)e^{-i{\nu}{x}}n,
\end{equation}
where $n$ runs the Hilbert space $N$ and $\chi_I(x)$ is the characteristic function of the interval $I$.
Therefore,  the defect numbers of  $\textsc{S}$  is  $<m,m>$  and $\textsc{S}$ is  a Phillips symmetric operator
(since $\mathfrak{N}_\mu$  and  $\mathfrak{N}_{\nu}$ are mutually orthogonal).

By Corollary \ref{new68},  the symmetric operator $S_\alpha$  in $\mathfrak{H}_\alpha$
 is unitary equivalent to the  symmetric operator
$\textsc{S}$  acting in $L_2(\mathbb{R}, N)$.  For this reason, it sufficient to establish
the decomposition $S_\alpha=S_1\oplus{S_2}$  for the case where $S_\alpha=\textsc{S}$ and $\mathfrak{H}_\alpha=L_2(\mathbb{R}, N)$.
Taking \eqref{neww45} into account, we decide that $\mathfrak{H}_1=L_2(\mathbb{R}_-, N)$  and
$\mathfrak{H}_2=L_2(\mathbb{R}_+, N)$.  Moreover, $\textsc{S}=\textsc{S}_1\oplus{\textsc{S}_2}$, where $\textsc{S}_1=i\frac{d}{dx}$, $\mathcal{D}(\textsc{S}_1)=\{u\in{W^1_2(\mathbb{R}_-, N)} : u(0)=0\}$
is a maximal symmetric operator in $L_2(\mathbb{R}_-, N)$ with defect numbers   $<m, 0>$  while,
 $\textsc{S}_2=i\frac{d}{dx}$, $\mathcal{D}(\textsc{S}_2)=\{u\in{W^1_2(\mathbb{R}_+, N)} : u(0)=0\}$
 is maximal symmetric in  $L_2(\mathbb{R}_+, N)$ with defect numbers  $<0,m>$.
\end{proof}
\begin{remark}\label{bbb21}
 It follows from the proof of Corollary \ref{bbb111} that each simple PSO
 $S$ with defect numbers  $<m,m>$ is unitary equivalent to the symmetric operator $\textsc{S}$
defined by \eqref{assa7}.
\end{remark}

\section{Proper extensions of a Phillips symmetric operator}

It follows from \eqref{assa7} and Remark \ref{bbb21} that  the spectrum  of a simple PSO $S$ is
continuous and it coincides with $\mathbb{R}$.  Furthermore, $\ker(S^*-\lambda{I})=\{0\}$  for  $\lambda\in\mathbb{R}$.
Therefore, each proper extension $A$  of a simple PSO has a continuous spectrum on $\mathbb{R}$ without
embedding eigenvalues. The lack of condition of being simple for a PSO means that the spectra of the corresponding proper
extensions coincides with $\mathbb{R}$  but real point spectrum may appear due to a possible self-adjoint part $S_3$ in \eqref{assa10}.

\begin{proposition}\label{assa107}
The spectrum $\sigma(A)$  of a proper extension ${A}$ of a
Phillips symmetric operator $S$ coincides with one of the following sets:
\begin{itemize}
  \item[(i)]  $\sigma({A})=\mathbb{R}$;
  \item[(ii)] $\sigma({A})=\mathbb{C}_-\cup\mathbb{R}$  \ or \ $\sigma({A})=\mathbb{R}\cup\mathbb{C}_+$;
  \item[(iii)] $\sigma({A})=\mathbb{C}$.
\end{itemize}
\end{proposition}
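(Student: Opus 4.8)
The plan is to establish two facts and then combine them. \emph{Fact~(I):} for a Phillips symmetric operator $S$ and a proper extension $A$, whether a given $z$ lies in $\rho(A)$ does not change as $z$ varies within $\mathbb{C}_+$, and, separately, as $z$ varies within $\mathbb{C}_-$; hence $\mathbb{C}_+$ lies entirely in $\rho(A)$ or entirely in $\sigma(A)$, and the same dichotomy holds for $\mathbb{C}_-$. \emph{Fact~(II):} $\mathbb{R}\subseteq\sigma(A)$ in all cases. Granting these, $\sigma(A)$ is a closed set containing $\mathbb{R}$ whose intersection with each open half-plane is empty or the whole half-plane; the four resulting combinations give precisely $\sigma(A)=\mathbb{R}$ (both half-planes in $\rho(A)$), $\sigma(A)=\mathbb{R}\cup\mathbb{C}_+$ or $\sigma(A)=\mathbb{C}_-\cup\mathbb{R}$ (exactly one half-plane in $\sigma(A)$), and $\sigma(A)=\mathbb{C}$ (both in $\sigma(A)$), i.e. the alternatives (i)--(iii).

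For Fact~(I), recall that $S$ is a Phillips symmetric operator exactly when, by Theorem~\ref{assa89} (with $\mu=i$) and \eqref{assa8b} (with $\xi=-i$), $\mathfrak{N}_z\subset\mathcal{D}(S)\dot{+}\mathfrak{N}_i$ for every $z\in\mathbb{C}_+$ and $\mathfrak{N}_z\subset\mathcal{D}(S)\dot{+}\mathfrak{N}_{-i}$ for every $z\in\mathbb{C}_-$; equivalently, the operators $A_z$ from~\eqref{as908} coincide for all $z$ in a given open half-plane. Fix $z\in\mathbb{C}_+$. Combining the von Neumann decomposition $\mathcal{D}(S^*)=\mathcal{D}(S)\dot{+}\mathfrak{N}_z\dot{+}\mathfrak{N}_{\overline z}$ with the two inclusions above and taking $\mathfrak{N}_{i}$- and $\mathfrak{N}_{-i}$-components produces linear isomorphisms $\pi_z:\mathfrak{N}_z\to\mathfrak{N}_i$ and $\pi_{\overline z}:\mathfrak{N}_{\overline z}\to\mathfrak{N}_{-i}$ with the following property: writing $\mathcal{D}(A)=\mathcal{D}(S)\dot{+}M$ with $M\subset\mathfrak{N}_i\dot{+}\mathfrak{N}_{-i}$ a fixed subspace, an element $f=u+f_z+f_{\overline z}$ (with $u\in\mathcal{D}(S)$) lies in $\mathcal{D}(A)$ precisely when $\pi_z(f_z)+\pi_{\overline z}(f_{\overline z})\in M$. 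A short computation --- write $g\in\mathfrak{H}$ as $g=g'+g''$ with $g'\in\mathcal{R}(S-z)$ and $g''\in\mathfrak{N}_{\overline z}=\mathcal{R}(S-z)^{\perp}$, solve $(S^*-z)f=g$, and impose the membership condition --- yields $\ker(A-z)=\pi_z^{-1}(M\cap\mathfrak{N}_i)$ and $\mathcal{R}(A-z)=\Phi_z^{-1}((M+\mathfrak{N}_i)\cap\mathfrak{N}_{-i})$ for a certain bounded surjection $\Phi_z:\mathfrak{H}\to\mathfrak{N}_{-i}$. Since the subspaces $M\cap\mathfrak{N}_i$ and $(M+\mathfrak{N}_i)\cap\mathfrak{N}_{-i}$ do not involve $z$, it follows that $z\in\rho(A)$ if and only if $M\cap\mathfrak{N}_i=\{0\}$ and $(M+\mathfrak{N}_i)\cap\mathfrak{N}_{-i}=\mathfrak{N}_{-i}$, a condition not depending on $z\in\mathbb{C}_+$. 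The mirror-image computation on $\mathbb{C}_-$ (exchanging the roles of $\mathfrak{N}_i$ and $\mathfrak{N}_{-i}$, using \eqref{assa8b}) gives the dichotomy on $\mathbb{C}_-$.

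For Fact~(II), suppose to the contrary that some real $\lambda_0$ belongs to $\rho(A)$. Since $S\subset A$ we have $(A-\lambda_0)^{-1}(S-\lambda_0)f=f$ for all $f\in\mathcal{D}(S)$, so $S-\lambda_0$ is bounded below and $\mathcal{R}(S-\lambda_0)$ is closed. On the other hand, by Corollary~\ref{bbb111} we may write $S=S_\alpha\oplus S_3$ with $S_\alpha$ a simple Phillips symmetric operator acting in some $\mathfrak{H}_\alpha$ and $S_3$ self-adjoint; as recorded just before the proposition, $\sigma(S_\alpha)=\mathbb{R}$ and this spectrum is purely continuous, so $\mathcal{R}(S_\alpha-\lambda_0)$ is dense in $\mathfrak{H}_\alpha$ but not closed. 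Then $\mathcal{R}(S-\lambda_0)=\mathcal{R}(S_\alpha-\lambda_0)\oplus\mathcal{R}(S_3-\lambda_0)$ cannot be closed --- its intersection with $\mathfrak{H}_\alpha$ is the non-closed subspace $\mathcal{R}(S_\alpha-\lambda_0)$ --- contradicting the previous line. Hence $\mathbb{R}\cap\rho(A)=\emptyset$.

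The substantive part is Fact~(I): extracting the isomorphisms $\pi_z,\pi_{\overline z}$ from the Phillips property and verifying that the descriptions of $\ker(A-z)$ and $\mathcal{R}(A-z)$ genuinely do not see $z$ inside a half-plane. Fact~(II) is brief; the only point to watch is that the self-adjoint summand $S_3$ may contribute real point spectrum, but this merely enlarges $\sigma(A)$ and does not disturb the argument. The concluding bookkeeping over the four cases is then routine.
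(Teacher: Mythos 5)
Your argument is correct, but it follows a genuinely different route from the paper's. The paper assumes a complex resolvent point $\lambda_0$ exists (otherwise one is already in case (iii)), writes $\mathcal{D}(A)$ in the form \eqref{assa11} with a bounded operator $\mathbf{T}$ in a boundary triplet, and then invokes the spectral criterion of \cite[Theorem 4.2]{KK}: $\lambda\in\sigma(A)\iff 0\in\sigma(\Theta(\lambda)-\mathbf{T})$ on $\mathbb{C}_+$ and $\lambda\in\sigma(A)\iff 0\in\sigma(I-\Theta(\lambda)\mathbf{T})$ on $\mathbb{C}_-$; the constancy of $\Theta$ then gives the half-plane dichotomy at once, while the inclusion $\mathbb{R}\subseteq\sigma(A)$ is taken from the discussion preceding the proposition. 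Your Fact (I) reaches the same dichotomy by direct linear algebra: you exploit the $z$-independence of $\mathcal{D}(S)\dot{+}\mathfrak{N}_z$ within each half-plane (Theorem \ref{assa89} and \eqref{assa8b}) to transport everything to $\mathfrak{M}_i$ and read off $\ker(A-z)$ and $\mathcal{R}(A-z)$ in terms of the fixed subspace $M$. This is self-contained --- no appeal to the cited spectral criterion, and no need to presuppose a complex resolvent point --- at the cost of being longer; the paper's route is shorter and, as a bonus, identifies exactly which case occurs according to whether $0\in\rho(\Theta-\mathbf{T})$ or $0\in\sigma(\Theta-\mathbf{T})$. Your Fact (II) supplies an actual proof of the inclusion $\mathbb{R}\subseteq\sigma(A)$, which the paper only asserts informally before the proposition via the model operator \eqref{assa7}. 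Two small points to tidy: the equivalence ``$A-z$ injective and surjective $\iff z\in\rho(A)$'' uses that $A$ is closed (for a non-closed proper extension $\rho(A)=\emptyset$ and one lands in case (iii) anyway); and in Fact (II) it is cleaner to say that $\mathcal{R}(S_\alpha-\lambda_0)$ is dense because $\ker(S_\alpha^*-\lambda_0)=\{0\}$ and proper because otherwise the symmetric $S_\alpha$ would be self-adjoint, rather than to invoke ``purely continuous spectrum'' for the non-maximal symmetric operator $S_\alpha$.
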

\begin{proof}
Let us suppose that a proper extension ${A}$  has a complex point $\lambda_0\in\rho({A})$.
Without loss of generality we may assume that $\lambda_0\in\mathbb{C}_-$.
Then, the domain of $A$  admits the presentation
$$
\mathcal{D}({A})=\{f=u+u_{\overline{\lambda}_0}+{\Phi}u_{\overline{\lambda}_0}  \ : \  \forall{u}\in\mathcal{D}(S), \ \forall{u_{\overline{\lambda}_0}}\in\mathfrak{N}_{{\overline{\lambda}_0}} \},
$$
where $\Phi : \mathfrak{N}_{{\overline{\lambda}_0}} \to \mathfrak{N}_{{{\lambda}_0}} $  is a bounded operator defined on
$\mathfrak{N}_{{\overline{\lambda}_0}}$.
The obtained expression can be rewritten in terms of  the boundary triplet \eqref{sas6} with $\mu=\overline{\lambda}_0$:
\begin{equation}\label{assa11}
 \mathcal{D}({A}) =\{f\in\mathcal{D}(S^*) \ : \
\mathbf{T}\Gamma_+f=\Gamma_-f \},
\end{equation}
where ${\mathbf{T}}=V\Phi$  is a bounded operator in the auxiliary Hilbert space $\mathfrak{N}_{\mu}$.

By virtue of \cite[Theorem 4.2]{KK} (see also \cite[Theorem 3]{Kochubei}),
$$
\lambda\in\sigma({A}) \iff  0\in\sigma(\Theta(\lambda)-\mathbf{T}),   \quad \lambda\in\mathbb{C}_+,
$$
$$
\lambda\in\sigma({A}) \iff  0\in\sigma(I-\Theta({\lambda})\mathbf{T}),  \quad  \lambda\in\mathbb{C}_-,
$$
where $\Theta(\cdot)$ is the characteristic function of $S$ associated with the boundary triplet $(\mathfrak{N}_{\mu}, \Gamma_-, \Gamma_+)$.
 Since $S$ is PSO, the characteristic function $\Theta(\cdot)$  is a constant on $\mathbb{C}_+$.
Therefore, $\lambda\in\sigma({A})\iff{0}\in\sigma(\Theta-\mathbf{T})$, where $\Theta(\lambda)=\Theta$ for all $\lambda\in\mathbb{C}_+$.
The obtained relation means that either  $\mathbb{C}_+$ belongs to $\sigma({A})$ or $\mathbb{C}_+\subset\rho({A})$.
Further, due to the assumption above, there is a resolvent point $\lambda_0\in\mathbb{C}_-$  of ${A}$.
Therefore $0\in\rho(I-\Theta^*\mathbf{T})$ and $\mathbb{C}_-\subset\rho({A})$.
Summing up, the spectrum $\sigma({A})$
corresponds to the cases $(i)$ or $(ii)$ in dependence of either ${0}\in\rho(\Theta-\mathbf{T})$  or ${0}\in\sigma(\Theta-\mathbf{T})$.
The case $\lambda_0\in\mathbb{C}_+\cap\rho({A})$ is considered in the same manner.
\end{proof}

\begin{theorem}\label{assa102}
Proper extensions of  a Phillips symmetric operator with real spectra are similar to each other.
\end{theorem}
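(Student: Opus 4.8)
The plan is to reduce the statement to the model operator $\textsc{S}$ of \eqref{assa7} and then to produce the similarity explicitly as a multiplication operator. First I would use the canonical decomposition \eqref{es121}, $S=S_\alpha\oplus S_3$ with $S_\alpha$ a simple Phillips symmetric operator on $\mathfrak H_\alpha$ and $S_3$ self-adjoint on $\mathfrak H_3$, together with the observation that every proper extension $A$ of $S$ splits as $A=A_\alpha\oplus S_3$: the defect subspaces (hence the subspace $M\subset\mathfrak M_\lambda$ defining $A$ via \eqref{e55}) lie in $\mathfrak H_\alpha$ because $S_3=S_3^*$ has trivial defect, so $\mathcal D(A)=(\mathcal D(S_\alpha)\dot+M)\oplus\mathcal D(S_3)$ and $A_\alpha=S_\alpha^*\!\upharpoonright_{\mathcal D(S_\alpha)\dot+M}$ is a proper extension of $S_\alpha$. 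Since $\sigma(A)=\sigma(A_\alpha)\cup\sigma(S_3)$ and $\sigma(S_3)\subset\mathbb R$, the hypothesis $\sigma(A)=\mathbb R$ forces $\sigma(A_\alpha)\subseteq\mathbb R$, whence Proposition \ref{assa107} applied to the Phillips symmetric operator $S_\alpha$ yields $\sigma(A_\alpha)=\mathbb R$. A similarity $W_\alpha$ between two such operators $A_\alpha$ extends to $W_\alpha\oplus I_{\mathfrak H_3}$ between the corresponding extensions of $S$, so it is enough to treat simple Phillips symmetric operators; and by Remark \ref{bbb21} we may assume outright $S=\textsc{S}=i\,\frac{d}{dx}$ on $L_2(\mathbb R,N)=L_2(\mathbb R_-,N)\oplus L_2(\mathbb R_+,N)$.

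Next I would fix the boundary triplet coming from Green's formula. Here $\mathcal D(\textsc{S}^*)=W^1_2(\mathbb R_-,N)\oplus W^1_2(\mathbb R_+,N)$ and integration by parts gives $(\textsc{S}^*f,g)-(f,\textsc{S}^*g)=i\big[(f(0^-),g(0^-))_N-(f(0^+),g(0^+))_N\big]$, so $(\mathcal H,\Gamma_-,\Gamma_+)=(N,\ f\mapsto f(0^+),\ f\mapsto f(0^-))$ is a boundary triplet of $\textsc{S}^*$; one checks from \eqref{new1} and \eqref{neww45} that the associated characteristic function is identically $0$, reconfirming that $\textsc{S}$ is a PSO. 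Thus every proper extension is $A=\textsc{S}^*\!\upharpoonright_{\mathcal D(A)}$ with $\mathcal D(A)=\{f\in\mathcal D(\textsc{S}^*):(f(0^-),f(0^+))\in\textsf L\}$ for a closed subspace $\textsf L\subset N\oplus N$. Solving $(\textsc{S}^*-\lambda)f=g$ directly on each half-line (or invoking the spectral criterion employed in the proof of Proposition \ref{assa107}, using $\Theta\equiv0$) one sees that for $\lambda\in\mathbb C_+$ the value $f(0^+)$ is uniquely determined by $g$ and exhausts all of $N$ as $g$ varies over $L_2(\mathbb R,N)$, while $f(0^-)$ is free; symmetrically for $\lambda\in\mathbb C_-$. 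Hence $\mathbb C_+\subset\rho(A)$ forces $\textsf L$ to be the graph of an everywhere-defined (so, by the closed graph theorem, bounded) operator over the second coordinate, $\mathbb C_-\subset\rho(A)$ forces it to be a graph over the first coordinate, and $\sigma(A)=\mathbb R$ holds precisely when $\textsf L=\{(Tb,b):b\in N\}$ with $T$ bounded and boundedly invertible on $N$. Writing $A_T$ for this extension, $\mathcal D(A_T)=\{f: f(0^-)=Tf(0^+)\}$, the self-adjoint extensions of $\textsc{S}$ are exactly the $A_T$ with $T$ unitary, e.g.\ $A_I$ (continuity at $0$, i.e.\ $i\,\frac{d}{dx}$ on $W^1_2(\mathbb R,N)$).

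Finally, given two such extensions $A_{T_1},A_{T_2}$, let $W$ be multiplication by the operator-valued function equal to the constant $T_2T_1^{-1}$ on $\mathbb R_-$ and to $I$ on $\mathbb R_+$. Then $W$ is bounded with bounded inverse on $L_2(\mathbb R,N)$; it commutes with $\frac{d}{dx}$ on each half-line because its multiplier is constant in $x$; and it carries $\mathcal D(A_{T_1})$ onto $\mathcal D(A_{T_2})$, since $f(0^-)=T_1f(0^+)$ gives $h=Wf$ with $h(0^-)=T_2T_1^{-1}f(0^-)=T_2f(0^+)=T_2h(0^+)$. Consequently $W(\textsc{S}^*f)=W(if')=i(Wf)'=\textsc{S}^*(Wf)$ on $\mathcal D(A_{T_1})$, i.e.\ $WA_{T_1}W^{-1}=A_{T_2}$; in particular every $A_T$ is similar to the self-adjoint $A_I$, and all proper extensions with real spectrum are similar. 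Moreover $W^*W$ is again piecewise-constant multiplication, hence commutes with $\textsc{S}$, so $\textsc{S}$ remains symmetric and $A_T$ becomes self-adjoint with respect to the equivalent inner product $\langle\cdot,\cdot\rangle=(W^*W\,\cdot,\cdot)$ — this realizes the ``special choice of inner product'' announced in the Introduction. I expect the main obstacle to be the middle paragraph: correctly pinning down which subspaces $\textsf L$ yield $\sigma(A)=\mathbb R$ and, when $\dim N=\infty$, verifying that the condition is \emph{exactly} that $T$ be bounded with bounded inverse (this needs the surjectivity of the relevant boundary maps plus the closed graph theorem); once that characterization is in hand, the similarity is a one-line multiplication operator.
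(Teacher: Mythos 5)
Your proposal is correct and follows essentially the same route as the paper: reduce to the model operator $\textsc{S}$ of \eqref{assa7}, identify the real-spectrum extensions with boundary conditions $f(0^-)$, $f(0^+)$ coupled by a bounded, boundedly invertible operator $T$ on $N$, and realize the similarity as multiplication by a piecewise-constant operator-valued function on the two half-lines (your $W$ is the paper's $U_F$ with $F=T_2T_1^{-1}$, up to the transposed convention in the boundary condition). The extra details you supply -- the splitting off of the self-adjoint part, the ODE argument pinning down which subspaces $\textsf{L}$ give $\sigma(A)=\mathbb{R}$, and the equivalent inner product $(W^*W\cdot,\cdot)$ -- are consistent with what the paper either cites (Proposition \ref{assa107} and the spectral criterion of \cite{KK}) or announces in the Introduction.
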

\begin{proof} By virtue of Corollaries \ref{new68}, \ref{bbb111}, it is sufficient to consider proper extensions of the simple PSO $\textsc{S}$ determined by \eqref{assa7}.  In this case,
$$
\textsc{S}^*f=i\frac{df}{dx}, \quad  \mathcal{D}(\textsc{S}^*)=W^{1}_2(\mathbb{R}\setminus\{0\}, N)
$$
and the defect subspaces $\mathfrak{N}_\mu$,  $\mathfrak{N}_{\overline{\mu}}$  ($\mu\in\mathbb{C}_+$) are formed, respectively, by the functions $\chi_{\mathbb{R}_-}(x)e^{-i\mu{x}}n$ and  $\chi_{\mathbb{R}_+}(x)e^{-i\overline{\mu}{x}}n,$
where $n$ runs the auxiliary Hilbert space $N$.

Let us choose the unitary mapping $V : \mathfrak{N}_{\overline{\mu}} \to \mathfrak{N}_\mu$ in the definition
of boundary triplet \eqref{sas6} as $V\chi_{\mathbb{R}_+}(x)e^{-i\overline{\mu}{x}}n=\chi_{\mathbb{R}_-}(x)e^{-i\mu{x}}n$
and consider the unitary mapping  $W$ between $\mathfrak{N}_\mu$  and $N$ as follows:
$$
W\chi_{\mathbb{R}_-}(x)e^{-i\mu{x}}n=\frac{n}{\sqrt{2(Im \ \mu)}}.
$$
Then the modified boundary triplet  $(W\mathfrak{N}_\mu, W\Gamma_-, W\Gamma_+)$ of the boundary triplet \eqref{sas6}
takes the form $(N, \Gamma_-^1, \Gamma_+^1)$,  where
$$
\Gamma_-^1f=f(0+),  \quad  \Gamma_+^1f=f(0-),   \qquad   f\in\mathcal{D}(\textsc{S}^*).
$$

If a proper extension ${A}$  of  $\textsc{S}$ has real spectrum, then its domain of definition
is determined by the formula  \eqref{assa11}, where ${\mathbf{T}}$ is a bounded operator in $\mathfrak{N}\mu$
with bounded inverse. This means that
\begin{equation}\label{assa103}
{A}=\textsc{S}^*\upharpoonright_{\mathcal{D}({A})},  \quad
\mathcal{D}({A})=\{f\in\mathcal{D}(S^*) \ : \
{T}f(0-)=f(0+) \},
\end{equation}
where $T=W{\mathbf{T}}W^{-1}$ is a bounded operator in $N$ with bounded inverse.

Let $F$ be a bounded operator with  bounded inverse in $N$.
Then, the operator
$$
U_Ff=\left\{\begin{array}{c}
Ff(x), \ x>0 \\
f(x),  \ x<0
\end{array}\right., \qquad f\in{L_2(\mathbb{R}, N)}
$$
preserves these properties as an operator acting in $L_2(\mathbb{R}, N)$  and $U_F^{-1}=U_{F^{-1}}$.
Furthermore, $U_F : \mathcal{D}(\textsc{S}^*) \to \mathcal{D}(\textsc{S}^*)$ and $U_F\textsc{S}^*=\textsc{S}^*U_F$.
These relations and \eqref{assa103} lead to the conclusion that
\begin{equation}\label{assa105}
U_FA_{{T}}f=U_F\textsc{S}^*f=\textsc{S}^*U_Ff=A_{F{T}}U_Ff,  \qquad  f\in\mathcal{D}(A_{{T}}),
\end{equation}
where $A_{{T}}$  denotes the proper extension ${A}$ in \eqref{assa103}.

Let ${A}_j$  be proper extensions of   $\textsc{S}$  with  $\sigma({A}_j)=\mathbb{R}$.
Then they are described in \eqref{assa103} by bounded operators  ${T}_j$ with $0\in\rho({T}_j)$ \ (${A}_j\equiv{A}_{{T}_j}$).
Due to \eqref{assa105},
\begin{equation}\label{assa15}
{A}_{{T}_1}=U^{-1}_F{{A}_{{T}_2}}U_F,  \quad \mbox{with} \quad F={T}_2{T}_1^{-1}.
\end{equation}
Therefore, ${A}_j$ are similar to each other.
\end{proof}
\begin{corollary}\label{assa17}
Self-adjoint extensions of a Phillips symmetric operator $S$ are unitary equivalent to each other.
Precisely, there exists a collection of unitary operators $\mathfrak{U}=\{U_\xi\}_{\xi\in{\mathfrak{I}}}$
($\mathfrak{I}$ is the set of indices) with the properties
$$
U_\xi\in\mathfrak{T} \iff U_\xi^*\in\mathfrak{T}, \qquad  U_\xi{S}=SU_\xi, \quad \forall\xi\in\mathfrak{I}
$$
and such that every pair of self-adjoint extensions $A_1$, $A_2$ of $S$ satisfy the relation
\begin{equation}\label{assa16}
U_\xi{A_1}=A_2U_\xi
\end{equation}
for some $\xi\in\mathfrak{I}$.
\end{corollary}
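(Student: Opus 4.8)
The plan is to specialize Theorem~\ref{assa102} to self-adjoint extensions and to observe that the similarity operators produced in its proof are, in this case, unitary. First I would invoke Corollary~\ref{bbb111} and Remark~\ref{bbb21} to reduce to the model situation: $\mathfrak{H}=\mathfrak{H}_\alpha\oplus\mathfrak{H}_3$, where $S_3=S\upharpoonright_{\mathfrak{H}_3}$ is self-adjoint and $S_\alpha=S_1\oplus S_2$ is a simple Phillips symmetric operator which is unitarily equivalent, via some unitary $Z:\mathfrak{H}_\alpha\to L_2(\mathbb{R},N)$, to the operator $\textsc{S}$ of \eqref{assa7}. Since $\mathfrak{N}_\mu(S)=\mathfrak{N}_\mu(S_\alpha)$ for $\mu\in\mathbb{C}\setminus\mathbb{R}$, the von Neumann formula \eqref{e55} shows that every proper extension of $S$ has the form $A=A_\alpha\oplus S_3$ with $A_\alpha$ a proper extension of $S_\alpha$, and $A$ is self-adjoint precisely when $A_\alpha$ is. Hence it will suffice to intertwine self-adjoint extensions of $\textsc{S}$, then transport the intertwiners back by $Z$ and complete them by $I_{\mathfrak{H}_3}$.

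Next I would recall from Section~\ref{chap2} that self-adjoint extensions correspond to \emph{unitary} boundary operators, so that in the description \eqref{assa103} a self-adjoint extension of $\textsc{S}$ is exactly an operator $A_T$ with $T$ unitary in $N$. The crucial point is then to revisit \eqref{assa105}: for \emph{unitary} $F$ the operator $U_F$ is not merely bounded with bounded inverse, but unitary in $L_2(\mathbb{R},N)$, because
$$
\|U_Ff\|^2=\int_{\mathbb{R}_+}\|Ff(x)\|_N^2\,dx+\int_{\mathbb{R}_-}\|f(x)\|_N^2\,dx=\|f\|^2 ,
$$
and $U_F^{-1}=U_{F^{-1}}=U_{F^*}=U_F^*$; moreover $U_F$ leaves $\mathcal{D}(\textsc{S})$ invariant (the constraint $u(0)=0$ is preserved) and $U_F\textsc{S}=\textsc{S}U_F$. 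I would then take $\mathfrak{U}=\{U_\xi\}_{\xi\in\mathfrak{I}}$, indexed by $\mathfrak{I}=\{F : F\ \text{unitary in}\ N\}$, to be precisely this family (transported to $\mathfrak{H}$ by $Z$ and completed by the identity on $\mathfrak{H}_3$, so that each $U_\xi$ commutes with $S=S_\alpha\oplus S_3$); it consists of unitary operators commuting with $S$ and is closed under adjoints, since $U_F^*=U_{F^*}$.

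Finally, given two self-adjoint extensions $A_1,A_2$ of $S$, their simple parts are $A_{T_1},A_{T_2}$ with $T_1,T_2$ unitary, so $F:=T_2T_1^{-1}$ is unitary and \eqref{assa105} yields $U_FA_{T_1}=A_{T_2}U_F$, that is, relation \eqref{assa16} with $\xi=F$, which completes the argument. I expect the only thing needing care to be the bookkeeping in the reduction step — checking that self-adjoint extensions of $S$ are exactly $A_\alpha\oplus S_3$ and that extending an intertwiner of the simple parts by $I_{\mathfrak{H}_3}$ preserves commutation with $S$ — but both are routine, so there is no genuine obstacle; all the real content lies in the unitarity of $U_F$ for unitary $F$, which is essentially forced by the computation displayed above.
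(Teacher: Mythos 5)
Your proposal is correct and follows essentially the same route as the paper: both reduce via Corollary \ref{new68} and the decomposition \eqref{assa10} to the model operator $\textsc{S}$ of \eqref{assa7}, observe that self-adjoint extensions correspond to unitary boundary operators $T$ in $N$ so that $U_F$ with $F=T_2T_1^{-1}$ is unitary, and extend by the identity on $\mathfrak{H}_3$. Your write-up is somewhat more explicit about the unitarity computation for $U_F$ and the bookkeeping $A=A_\alpha\oplus S_3$, but the argument is the same.
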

\begin{proof}  All self-adjoint extensions of the symmetric operator
$\textsc{S}$ are uniquely distinguished in \eqref{assa103} by the set of unitary operators $T$ acting in $N$ (see Section \ref{chap2}).
Therefore, the operators $U_T$ defined above are unitary operators in $L_2(\mathbb{R}, N)$ and
\eqref{assa15} can be rewritten as \eqref{assa16} where $A_i=A_{T_i}$ are self-adjoint extensions of
 $\textsc{S}$ and $\xi={T}_2{T}_1^{-1}$ is unitary operator in $L_2(\mathbb{R}, N)$.

It follows from the definition of $U_T$ that the set of $\mathfrak{U}=\{U_\xi\}_{\xi}$,
where $\xi$ runs the set $\mathfrak{I}$ of unitary operators in $N$ satisfies the conditions
of Corollary \ref{assa17}. Therefore, the proof is complete for the simple PSO $\textsc{S}$ defined by \eqref{assa7}.
 This result is extended to an arbitrary simple PSO $S$ with the use of
 Corollary \ref{new68}.

 The required set $\mathfrak{U}=\{U_\xi\}_{\xi\in{\mathfrak{I}}}$  for the general case of a Phillips symmetric operator
 is obtained on the base of previously constructed (for simple operators) set  by the addition of the identity operator $I_3$ acting in the subspace $\mathfrak{H}_3$ (see \eqref{assa10}) corresponding to the self-adjoint part of $S$.
\end{proof}
\begin{remark}
It follows from the construction of $\mathfrak{U}=\{U_\xi\}_{\xi\in{\mathfrak{I}}}$ in Corollary
\ref{assa17} that the symmetric operator $S$ is $\mathfrak{U}$-invariant. However,
there are no $\mathfrak{U}$-invariant self-adjoint extensions of $S$.  Firstly, an example of such kind
 was constructed by Phillips \cite{PH}.
\end{remark}

\begin{corollary}
Each self-adjoint extension of a simple PSO has Lebesgue spectrum on $\mathbb{R}$.
\end{corollary}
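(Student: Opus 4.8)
The plan is to reduce the statement to a single concrete model and then appeal to a classical spectral computation. First I would invoke Corollary \ref{new68} together with Remark \ref{bbb21}: an arbitrary simple PSO $S$ is unitarily equivalent to the momentum operator $\textsc{S}$ of \eqref{assa7} acting in $L_2(\mathbb{R}, N)$, where $\dim N$ is the defect number of $S$. Since unitary equivalence preserves the spectral type, this replaces the claim by the assertion that every self-adjoint extension of $\textsc{S}$ has Lebesgue spectrum on $\mathbb{R}$.

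Next I would use Corollary \ref{assa17}, which states that all self-adjoint extensions of $\textsc{S}$ are unitarily equivalent to one another. Consequently it suffices to determine the spectral type of one convenient extension. The natural choice is the extension $A_I$ described by \eqref{assa103} with $T = I$: its domain is precisely $\{f \in W^1_2(\mathbb{R}\setminus\{0\}, N) : f(0-) = f(0+)\} = W^1_2(\mathbb{R}, N)$, so that $A_I = i\frac{d}{dx}$ is the free momentum operator on the whole real line.

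Finally, the spectral analysis of $A_I$ is classical: the vector-valued Fourier transform is a unitary operator on $L_2(\mathbb{R}, N)$ which conjugates $A_I$ into the operator of multiplication by the independent variable on $L_2(\mathbb{R}, N)$. The latter has purely absolutely continuous spectrum $\mathbb{R}$ of constant multiplicity $\dim N$, i.e.\ Lebesgue spectrum on $\mathbb{R}$; transporting this back along the chain of unitary equivalences ($S \sim \textsc{S}$, and each self-adjoint extension of $\textsc{S}$ unitarily equivalent to $A_I$) completes the argument.

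I do not expect a genuine obstacle here; the only points that merit a word of care are (a) verifying that the interface condition $f(0-) = f(0+)$ imposed on $W^1_2(\mathbb{R}\setminus\{0\}, N)$ indeed recovers $W^1_2(\mathbb{R}, N)$, so that $A_I$ really is the free operator and is self-adjoint (the case $T = I$, $T$ trivially unitary), and (b) recording that the property of having Lebesgue spectrum is invariant under unitary equivalence, which is what legitimizes passing from $A_I$ to an arbitrary self-adjoint extension of $\textsc{S}$, and hence of $S$.
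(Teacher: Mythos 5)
Your argument is correct and follows essentially the same route as the paper: reduce to the model operator $\textsc{S}$ of \eqref{assa7} via Corollary \ref{new68}, observe that the free momentum operator $\textsc{A}$ in \eqref{assa101b} is a self-adjoint extension with Lebesgue spectrum, and transport this to all other self-adjoint extensions. The only (harmless) difference is that you invoke Corollary \ref{assa17} (unitary equivalence of self-adjoint extensions), which is in fact slightly cleaner for preserving spectral type, whereas the paper cites Theorem \ref{assa102} (similarity of proper extensions with real spectrum).
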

\begin{proof} In view of Corollary \ref{new68},  a simple PSO
is unitary equivalent to the symmetric operator $\textsc{S}$ in \eqref{assa7}.
The momentum operator
\begin{equation}\label{assa101b}
\textsc{A}=i\frac{d}{dx},  \qquad \mathcal{D}(\textsc{A})={W^1_2(\mathbb{R}, N)}
\end{equation}
is a  self-adjoint  extension of $\textsc{S}$ in $L_2(\mathbb{R}, N)$ and it has Lebesgue spectrum on $\mathbb{R}$.
Applying now Theorem \ref{assa102} we complete the proof.
\end{proof}

\section{Phillips symmetric operators as the restriction of self-adjoint ones}
\begin{lemma}
 Let $A$ be a self-adjoint operator in a Hilbert space $\mathfrak{H}$.
Each closed densely defined symmetric operator that can be determined via certain restriction of $A$ is
specified by the formula
 \begin{equation}\label{new19}
S_{\mathcal{L}}=A\upharpoonright_{\mathcal{D}(S_{\mathcal{L}})}, \qquad  \mathcal{D}(S_{\mathcal{L}})=\{u\in\mathcal{D}(A) \ :  \ ((A-iI)u, \gamma)=0, \ \forall{\gamma}\in{\mathcal{L}}\},
\end{equation}
where $\mathcal{L}$ is a linear subspace of $\mathfrak{H}$ such that
${\mathcal L}\cap\mathcal{D}(A)=\{0\}$.
\end{lemma}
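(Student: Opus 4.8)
The plan is to establish the characterization in both directions. First: every closed, densely defined symmetric $S$ with $S\subset A$ arises as $S_{\mathcal{L}}$ in \eqref{new19} for a suitable $\mathcal{L}$ with $\mathcal{L}\cap\mathcal{D}(A)=\{0\}$. Second: conversely, for every closed subspace $\mathcal{L}$ with $\mathcal{L}\cap\mathcal{D}(A)=\{0\}$ the operator defined by \eqref{new19} is a closed, densely defined symmetric restriction of $A$; since \eqref{new19} depends only on $\overline{\mathcal{L}}$, there is no loss in assuming $\mathcal{L}$ closed. Throughout I would use that $A=A^*$ puts $\pm i$ in $\rho(A)$, so $A-iI$ maps $\mathcal{D}(A)$ bijectively onto $\mathfrak{H}$, the inverse $(A-iI)^{-1}$ is bounded and everywhere defined with range $\mathcal{D}(A)$, and $\bigl((A-iI)^{-1}\bigr)^*=(A+iI)^{-1}$.

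For the forward direction I would take $\mathcal{L}:=\mathfrak{N}_{-i}=\ker(S^*+iI)$. Since $S$ is closed and symmetric, $S-iI$ is bounded below (indeed $\|(S-iI)u\|^2=\|Su\|^2+\|u\|^2$), so $\mathcal{R}(S-iI)$ is closed; as $\mathcal{R}(S-iI)^{\perp}=\ker\bigl((S-iI)^*\bigr)=\ker(S^*+iI)=\mathcal{L}$, we obtain $\mathcal{R}(S-iI)=\mathcal{L}^{\perp}$. For $u\in\mathcal{D}(A)$ one then checks $u\in\mathcal{D}(S)\iff(A-iI)u\in\mathcal{R}(S-iI)=\mathcal{L}^{\perp}$: the implication "$\Rightarrow$" is clear since $Au=Su$ on $\mathcal{D}(S)$, and for "$\Leftarrow$" one writes $(A-iI)u=(S-iI)v=(A-iI)v$ for some $v\in\mathcal{D}(S)$ and invokes injectivity of $A-iI$ to get $u=v\in\mathcal{D}(S)$. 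Hence $\mathcal{D}(S)=\{u\in\mathcal{D}(A):((A-iI)u,\gamma)=0\ \forall\gamma\in\mathcal{L}\}$ and, as $S$ and $A$ coincide there, $S=S_{\mathcal{L}}$. Finally $\mathcal{L}\cap\mathcal{D}(A)=\{0\}$: from $S\subset A=A^*$ we get $A\subset S^*$, so a nonzero $\gamma\in\mathcal{L}\cap\mathcal{D}(A)$ would satisfy $A\gamma=S^*\gamma=-i\gamma$, impossible for a self-adjoint operator.

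For the converse, fix a closed subspace $\mathcal{L}$ with $\mathcal{L}\cap\mathcal{D}(A)=\{0\}$. By construction $S_{\mathcal{L}}\subset A$, so $S_{\mathcal{L}}$ is symmetric, and $\mathcal{D}(S_{\mathcal{L}})=(A-iI)^{-1}(\mathcal{L}^{\perp})$. Closedness follows from a limit argument: if $u_n\in\mathcal{D}(S_{\mathcal{L}})$ with $u_n\to u$ and $Au_n\to g$, then $(A-iI)u_n\to g-iu\in\mathcal{L}^{\perp}$ by closedness of $\mathcal{L}^{\perp}$, and applying the bounded operator $(A-iI)^{-1}$ gives $u=(A-iI)^{-1}(g-iu)\in\mathcal{D}(A)$ with $(A-iI)u=g-iu\in\mathcal{L}^{\perp}$, so $u\in\mathcal{D}(S_{\mathcal{L}})$ and $S_{\mathcal{L}}u=g$. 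For density, if $v\perp\mathcal{D}(S_{\mathcal{L}})$ then $\bigl((A-iI)^{-1}\bigr)^*v=(A+iI)^{-1}v$ is orthogonal to $\mathcal{L}^{\perp}$, hence lies in $(\mathcal{L}^{\perp})^{\perp}\cap\mathcal{D}(A)=\mathcal{L}\cap\mathcal{D}(A)=\{0\}$, and injectivity of $(A+iI)^{-1}$ forces $v=0$.

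The step I expect to be the main obstacle is this density argument in the converse: it is the only place where the hypothesis on $\mathcal{L}$ is genuinely used, it rests on the adjoint identity $\bigl((A-iI)^{-1}\bigr)^*=(A+iI)^{-1}$, and it also makes it clear that, because \eqref{new19} sees only $\overline{\mathcal{L}}$, the condition $\mathcal{L}\cap\mathcal{D}(A)=\{0\}$ has real content precisely when $\mathcal{L}$ is taken closed. The remaining points — symmetry, closedness, and the translation between $\mathcal{D}(S)$ and $\mathcal{R}(S-iI)$ through the resolvent — are routine once $(A-iI)^{-1}$ is in hand.
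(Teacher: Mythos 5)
Your proposal is correct and follows essentially the same route as the paper: the forward direction takes $\mathcal{L}=\mathfrak{H}\ominus\mathcal{R}(S-iI)=\ker(S^*+iI)$, and the converse establishes density via the identity $(u,p)=((A-iI)u,(A+iI)^{-1}p)$, i.e.\ the adjoint relation $\bigl((A-iI)^{-1}\bigr)^*=(A+iI)^{-1}$, exactly as in the paper. You merely supply more detail (explicit closedness, verification that $\mathcal{L}\cap\mathcal{D}(A)=\{0\}$ in the forward direction, the reduction to closed $\mathcal{L}$) than the paper's terser argument.
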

\begin{proof}
If $S$ is a closed symmetric densely defined restriction of $A$, then $S=S_{\mathcal{L}}$,
where ${\mathcal{L}}=\mathfrak{H}\ominus\mathcal{R}(S-iI)$.  Conversely, let $S_{\mathcal{L}}$
be defined by \eqref{new19}. Obviously, $S_{\mathcal{L}}$ is a closed symmetric operator and,
for any ${u}\in\mathcal{D}(S_{\mathcal{L}})$ and $p\in\mathfrak{H}$,
$$
(u,p)=((S_{\mathcal{L}}-iI)u, (A+iI)^{-1}p)=((A-iI)u, (A+iI)^{-1}p).
$$
The obtained relation means that $S_{\mathcal{L}}$ is nondensely defined $\iff$ $(A+iI)^{-1}p\in{\mathcal{L}} \ \iff \ {\mathcal L}\cap\mathcal{D}(A)\not=\{0\}$. Therefore, the condition
${\mathcal L}\cap\mathcal{D}(A)=\{0\}$ guarantees that $S_{\mathcal{L}}$ is densely defined.
\end{proof}

The symmetric operator $S_{\mathcal{L}}$ in \eqref{new19} turns out to be a PSO
under certain choice of ${\mathcal{L}}$. To specify the required conditions, we consider the
unitary operator
\begin{equation}\label{new534}
U=(A+iI)(A-iI)^{-1},  \qquad (A=i(U+I)(U-I)^{-1})
\end{equation}
which is the Cayley transform of $A$ and
recall that a subspace ${\mathcal{L}}$ is called \emph{a wandering subspace} of $U$
if $U^n{\mathcal{L}}\perp{\mathcal{L}}$ for any $n\in\mathbb{N}$.

\begin{theorem}\label{assa108}
The following statements are equivalent:
\begin{itemize}
  \item[(i)]  the operator $S_{\mathcal{L}}$ defined by \eqref{new19} is a Phillips symmetric operator;
  \item[(ii)] the subspace $\mathcal{L}$ is a wandering subspace of the unitary operator $U$.
\end{itemize}
\end{theorem}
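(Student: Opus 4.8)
The plan is to make the defect subspaces $\mathfrak{N}_\lambda(S_{\mathcal{L}})$ explicit in terms of the Cayley transform $U$ from \eqref{new534} and then to apply the orthogonality characterization of Phillips symmetric operators, i.e. Theorem \ref{assa77} in the weakened form provided by Remark \ref{assa33}. Since replacing $\mathcal{L}$ by its closure changes neither statement (i) nor statement (ii), I would first assume $\mathcal{L}$ closed.

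First I would extract the boundary data. From \eqref{new19} and the fact that $A-iI$ maps $\mathcal{D}(A)$ bijectively onto $\mathfrak{H}$, it follows that $\mathcal{R}(S_{\mathcal{L}}-iI)=\mathcal{L}^{\perp}$, hence $\mathcal{D}(S_{\mathcal{L}})=(A-iI)^{-1}\mathcal{L}^{\perp}$ and $\mathfrak{N}_{-i}(S_{\mathcal{L}})=\bigl(\mathcal{R}(S_{\mathcal{L}}-iI)\bigr)^{\perp}=\mathcal{L}$. For $\lambda\in\mathbb{C}_+$ I would write a typical element of $\mathcal{D}(S_{\mathcal{L}})$ as $u=(A-iI)^{-1}v$ with $v\in\mathcal{L}^{\perp}$ and compute
$$
(S_{\mathcal{L}}-\overline{\lambda}I)u=\bigl[I+(i-\overline{\lambda})(A-iI)^{-1}\bigr]v=:E_\lambda v,
$$
so that $E_\lambda$ is a bounded operator, $\mathcal{R}(S_{\mathcal{L}}-\overline{\lambda}I)=E_\lambda\mathcal{L}^{\perp}$, and therefore $\mathfrak{N}_\lambda(S_{\mathcal{L}})=\bigl(\mathcal{R}(S_{\mathcal{L}}-\overline{\lambda}I)\bigr)^{\perp}=(E_\lambda^{*})^{-1}\mathcal{L}$. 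Using the identity $(A-iI)^{-1}=\tfrac{1}{2i}(U-I)$ coming from \eqref{new534}, a short computation rewrites $E_\lambda=\tfrac{1}{2i}\bigl[(i-\overline{\lambda})U+(i+\overline{\lambda})I\bigr]$ as a nonzero scalar multiple of $U-\overline{\zeta}\,I$, where $\zeta=\zeta(\lambda):=\tfrac{\lambda-i}{\lambda+i}$ lies in the open unit disk $\mathbb{D}$ for $\lambda\in\mathbb{C}_+$; consequently $E_\lambda^{*}$ is a nonzero scalar times $U^{-1}-\zeta I=U^{-1}(I-\zeta U)$, and up to an irrelevant scalar
$$
\mathfrak{N}_\lambda(S_{\mathcal{L}})=(I-\zeta U)^{-1}U\,\mathcal{L},\qquad \lambda\in\mathbb{C}_+ .
$$
Since $\|\zeta U\|=|\zeta|<1$, the Neumann series yields $(I-\zeta U)^{-1}U=\sum_{m\ge1}\zeta^{\,m-1}U^{m}$ in operator norm, so $\mathfrak{N}_\lambda(S_{\mathcal{L}})=\bigl\{\sum_{m\ge1}\zeta^{\,m-1}U^{m}\gamma:\gamma\in\mathcal{L}\bigr\}$; and as $\lambda$ runs over $\mathbb{C}_+$ the parameter $\zeta$ runs over all of $\mathbb{D}$ (the Cayley transform being a bijection of $\mathbb{C}_+$ onto $\mathbb{D}$ with $i\mapsto0$, the value $\zeta=0$ giving $\mathfrak{N}_i(S_{\mathcal{L}})=U\mathcal{L}$).

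Now the equivalence falls out. By Remark \ref{assa33}, $S_{\mathcal{L}}$ is a Phillips symmetric operator if and only if $\mathfrak{N}_\lambda(S_{\mathcal{L}})\perp\mathfrak{N}_{-i}(S_{\mathcal{L}})=\mathcal{L}$ for every $\lambda\in\mathbb{C}_+$, which by the previous paragraph is the same as
$$
\sum_{m\ge1}\zeta^{\,m-1}(U^{m}\gamma,\ell)=0\qquad\text{for all }\gamma,\ell\in\mathcal{L}\text{ and all }\zeta\in\mathbb{D}.
$$
For fixed $\gamma,\ell$ the left-hand side is a power series in $\zeta$ with coefficients $(U^{m}\gamma,\ell)$, bounded in modulus by $\|\gamma\|\,\|\ell\|$; such a series vanishes throughout $\mathbb{D}$ if and only if all its coefficients vanish, i.e. $(U^{m}\gamma,\ell)=0$ for every $m\in\mathbb{N}$ and all $\gamma,\ell\in\mathcal{L}$. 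The latter is precisely the statement that $U^{m}\mathcal{L}\perp\mathcal{L}$ for every $m\in\mathbb{N}$, i.e. that $\mathcal{L}$ is a wandering subspace of $U$. Combining the chain of equivalences proves (i)$\,\Leftrightarrow\,$(ii).

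The routine parts are the two resolvent identities in the second paragraph; the only places that need a little care are checking that the scalar factors appearing when passing from $A$ to $U$ are nonzero and that $E_\lambda$ and $E_\lambda^{*}$ are boundedly invertible for $\lambda\in\mathbb{C}_+$ (immediate since $\lambda\notin\mathbb{R}$, and $U-\zeta I$ and $U^{-1}-\zeta I$ are invertible for $|\zeta|<1$ because $\sigma(U)$ lies on the unit circle), together with the passage from orthogonality ``for each fixed $\lambda$'' to ``vanishing of all power-series coefficients'' — which is legitimate exactly because the admissible parameters $\zeta$ fill the open disk and a power series vanishing on an open set is identically zero. I do not foresee any deeper obstacle.
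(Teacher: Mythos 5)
Your argument is essentially correct, and for one of the two implications it is genuinely different from (and more economical than) the paper's. The computation of the defect subspaces is the same in substance: your $(I-\zeta U)^{-1}U\mathcal{L}$ with $\zeta=\frac{\lambda-i}{\lambda+i}$ is exactly the paper's $T_\lambda\mathcal{L}$ with $T_\lambda=\frac{2U}{1-i\lambda}\sum_{n\ge0}t^nU^n$, $t=\frac{i\lambda+1}{i\lambda-1}=\zeta$, so the implication (ii)$\Rightarrow$(i) proceeds identically in both proofs (Neumann series plus Remark \ref{assa33}). The real divergence is in (i)$\Rightarrow$(ii): the paper reduces to a simple PSO, invokes the unitary equivalence of simple PSO with equal defect numbers (Corollary \ref{new68}) and an argument that any self-adjoint extension may be used in \eqref{new19}, and then verifies wandering on the concrete bilateral-shift model in $l_2(\mathbb{Z},N)$. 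You instead observe that the orthogonality $\mathfrak{N}_\lambda\perp\mathcal{L}$ for \emph{all} $\lambda\in\mathbb{C}_+$ forces the analytic function $\zeta\mapsto\sum_{m\ge1}\zeta^{m-1}(U^m\gamma,\ell)$ to vanish on all of $\mathbb{D}$, hence all coefficients $(U^m\gamma,\ell)$ vanish. This gives both implications from a single identity, avoids the model-based reduction entirely, and in particular does not rely on Corollaries \ref{new68} and \ref{assa17}; that is a genuine simplification.

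One point you gloss over: in the direction (ii)$\Rightarrow$(i) you must know that $S_{\mathcal{L}}$ is a densely defined symmetric operator before you may speak of its defect subspaces and apply Theorem \ref{assa77}/Remark \ref{assa33}; by the Lemma preceding the theorem this amounts to $\mathcal{L}\cap\mathcal{D}(A)=\{0\}$. The paper derives this from the wandering property via the spectral measure of $U$ (for $f\in\mathcal{L}$ the measure $d(E_\lambda f,f)$ is a multiple of Lebesgue measure on the circle, so $\int\cot^2(\lambda/2)\,d(E_\lambda f,f)=\infty$ and $f\notin\mathcal{D}(A)$). If you read statement (i) as presupposing the Lemma's hypothesis this is harmless, but as the theorem is intended the check belongs in the proof, and your draft omits it. The remaining steps --- closedness of $\mathcal{R}(S_{\mathcal{L}}-\overline{\lambda}I)=E_\lambda\mathcal{L}^{\perp}$, invertibility of $E_\lambda^*$ for $|\zeta|<1$, surjectivity of $\lambda\mapsto\zeta$ onto $\mathbb{D}$ --- are all correctly handled.
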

\begin{proof}  $(ii)\Rightarrow(i)$. Let $\mathcal{L}$ be wandering for $U$.  First of all we should check
that  ${\mathcal L}\cap\mathcal{D}(A)=\{0\}$.  Indeed, for all $f\in{\mathcal L}$,  $(U^nf,f)=0$ and, hence
$\int_0^{2\pi}e^{in\lambda}d(E_\lambda{f}, f)=0$, where $E_\lambda$ is the spectral function of $U$.
By the uniqueness theorem for the Fourier-Stieltjes series, the last equality  means
that $(E_\lambda{f}, f)=\frac{\lambda}{2\pi}\|f\|^2$.

It follows from \eqref{new534} that
$$
A=i\int_0^{2\pi}\frac{e^{i\lambda}+1}{e^{i\lambda}-1}dE_\lambda=\int_0^{2\pi}\cot(\lambda/2)dE_\lambda
$$
with the domain $\mathcal{D}(A)=\{f\in\mathfrak{H} \ : \ \int_{0}^{2\pi}\cot^2(\lambda/2)d(E_\lambda{f}, f)<\infty\}$.
In the case of $f\in{\mathcal L}$,
$$
\int_{0}^{2\pi}\cot^2(\lambda/2)d(E_\lambda{f}, f)=\frac{\|f\|^2}{2\pi}\int_{0}^{2\pi}\cot^2(\lambda/2)d\lambda=\infty.
$$
Therefore,  ${\mathcal L}\cap\mathcal{D}(A)=\{0\}$ and the operator $S_{\mathcal{L}}$ is densely defined.

It follows from \eqref{new19} that the defect subspace $\mathfrak{N}_{-i}$ of $S_{\mathcal{L}}$ coincides with
${\mathcal{L}}$. In order to describe other defect subspaces $\mathfrak{N}_{\alpha}$ of $S_{\mathcal{L}}$
we consider the operator
$$
T_{\alpha}=(A+iI)(A-\alpha{I})^{-1}, \qquad \alpha\in\mathbb{C}_-\cup\mathbb{C}_+.
$$
The formula $\mathfrak{N}_{\alpha}=T_{\alpha}{\mathcal{L}}$ is verified directly with the use of \eqref{new19}.

Using \eqref{new534} we get $T_\alpha=2U[(1+i\alpha)U+(1-i\alpha)I]^{-1}.$
In particular, if $\alpha=\lambda\in\mathbb{C}_+$ the obtained expression for $T_\alpha$
can be rewritten as
\begin{equation}\label{ass14}
T_\lambda=\frac{2U}{1-i\lambda}\left[I-tU\right]^{-1}=\frac{2U}{1-i\lambda}\sum_{n=0}^\infty{t^nU^n},  \quad t=\frac{i\lambda+1}{i\lambda-1}
\end{equation}
since $\|tU\|=|t|<1$.

Since $U^n{\mathcal{L}}\perp{\mathcal{L}}$ for all $n\in\mathbb{N}$,
the relation \eqref{ass14} yields that $T_\lambda{\mathcal{L}}\perp{\mathcal{L}}$ for $\lambda\in\mathbb{C}_+$.
Therefore, $\mathfrak{N}_\lambda\perp\mathfrak{N}_{-i}$. Due to Remark \ref{assa33} and Theorem \ref{assa77}, the
operator $S_{\mathcal{L}}$ is PSO. The implication $(ii)\Rightarrow(i)$ is proved.

$(i)\Rightarrow(ii)$.  If $S_{\mathcal{L}}$ is a PSO, then the decomposition \eqref{assa10} and \eqref{new19}
imply that  $\mathcal{L}$ is a subspace of $\mathfrak{H}_1\oplus\mathfrak{H}_2$. Therefore, it is sufficient to assume that
$S_{\mathcal{L}}$ is a simple symmetric operator.

Another important fact is that we can consider \emph{arbitrary} self-adjoint extension of $S_{\mathcal{L}}$  in \eqref{new19}.
Indeed,  let $A_1$ be  a self-adjoint extension of $S_{\mathcal{L}}$  such that  $A_1\not={A}$.  Then, due to Corollary \ref{assa17}, there exists a unitary operator $U_\xi$ such that $U_\xi{S}=SU_\xi$ and $U_\xi{A}={A_1}U_\xi$.
Hence,  the domain $\mathcal{D}(S_{\mathcal{L}})$ can be described
as
$$
\mathcal{D}(S_{\mathcal{L}})=\{v\in\mathcal{D}(A_1) \ :  \ ((A_1-iI)v, g)=0, \ \forall{g}\in{\mathcal{L}_1}=U_\xi\mathcal{L}\}.
$$
Since the Cayley transformations $U$ and $U_1$ of the operators $A$ and $A_1$ are related as
$U_\xi{U}={U_1}U_\xi$, the existence  of a wandering subspace $\mathcal{L}$ for $U$ implies that
${\mathcal{L}_1}=U_\xi\mathcal{L}$ will be a wandering subspace for $U_1$.
Therefore, it does not matter which self-adjoint extension of $S_{\mathcal{L}}$  we will
consider in  \eqref{new19}.

Simple Phillips symmetric operators with the same defect numbers are unitary equivalent  (Corollary \ref{new68}).
For this reason, we can consider a concrete  Phillips symmetric operator in \eqref{new19}.
It is useful to work with PSO $S_{\mathcal{L}}$ defined
in ${\mathfrak H}=l_2(\mathbb{Z}, N)$  ($N$ is an auxiliary Hilbert space) as follows:
\begin{equation}\label{assa4bb}
S_{\mathcal{L}}u=i(\ldots, x_{-3}+x_{-2}, x_{-2}+x_{-1}, \underline{x_{-1}},
x_{1}, x_{1}+x_{2}, \ldots),  \qquad x_j\in{N},
\end{equation}
where element at the zero position is underlined and
$$
u\in\mathcal{D}(S)\iff{u}=(\ldots, x_{-3}-x_{-2},
x_{-2}-x_{-1}, \underline{x_{-1}},-x_{1}, x_{1}-x_{2}, \ldots),
$$
where $\sum_{i\in\mathbb{Z}}\|x_i\|_N^2<\infty$.

The operator $S_{\mathcal{L}}$ defined by \eqref{assa4bb} is a simple PSO
in $l_2(\mathbb{Z}, N)$ and the operator
\begin{equation}\label{e4}
Au=i(\ldots, x_{-3}+x_{-2}, x_{-2}+x_{-1}, \underline{x_{-1}+x_{0}},
x_{0}+x_{1}, x_{1}+x_{2}, \ldots)
\end{equation}
with the domain of definition $u\in\mathcal{D}(A)\iff$
$$
u=(\ldots, x_{-3}-x_{-2}, x_{-2}-x_{-1},
\underline{x_{-1}-x_{0}}, x_{0}-x_{1}, x_{1}-x_{2}, \ldots), \quad \sum_{i\in\mathbb{Z}}\|x_i\|_N^2<\infty
$$
is the self-adjoint extension of $S_{\mathcal{L}}$ \cite{KK, KSV}.

It follows from \eqref{assa4bb} and \eqref{e4} that
$\mathcal{D}(S_{\mathcal{L}})$ consists of those $u\in\mathcal{D}(A)$ for which $x_0=0$.
Direct calculation with use of \eqref{e4} shows that $
(A-iI)u=2i(\ldots, x_{-2},  x_{-1},  \underline{x_{0}},  x_{1},  x_{2}, \ldots).$
Therefore,  the formula \eqref{new19}  gives  $\mathcal{D}(S_{\mathcal{L}})$ if
$$
\mathcal{L}=\{(\ldots, 0,  0,  \underline{x_{0}},  0,  0, \ldots)\in{l_2(\mathbb{Z}, N)} \ : \  \forall{x_0}\in{N}\}.
$$

It is easy to see that the Cayley transform $U$ of $A$ coincides with the bilateral shift
$$
U(\ldots, x_{-2}, x_{-1}, \underline{x_{0}}, x_{1},
x_{2},\ldots)=(\ldots, x_{-3}, x_{-2}, \underline{x_{-1}}, x_{0},
x_{1},\ldots)
$$
in $l_2(\mathbb{Z}, N)$. The subspace $\mathcal{L}$ is a wandering subspace for $U$.
The proof is complete.
\end{proof}
\begin{corollary}\label{bbb27}
The set of symmetric restrictions of a given self-adjoint operator $A$ contains
a  Phillips symmetric operator if and only if there exists  a  reducing subspace $\mathfrak{H}_0$
of $A$ such that the self-adjoint operator $A_0=A\upharpoonright_{\mathcal{D}(A)\cap\mathfrak{H}_0}$
has Lebesgue spectrum on $\mathbb{R}$.

The existence of a simple PSO among symmetric restrictions of $A$ is equivalent to the  fact  that
$A$  has Lebesgue spectrum on $\mathbb{R}$.
\end{corollary}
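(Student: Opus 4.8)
Corollary \ref{bbb27} — proof plan.

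The plan is to reduce everything to Theorem \ref{assa108} together with the orthogonal-sum structure of a PSO from Corollary \ref{bbb111}. First I would recall that by the Lemma preceding Theorem \ref{assa108}, every closed densely defined symmetric restriction of $A$ has the form $S_{\mathcal{L}}$ for some subspace $\mathcal{L}$ with $\mathcal{L}\cap\mathcal{D}(A)=\{0\}$, and by Theorem \ref{assa108} such an $S_{\mathcal{L}}$ is a PSO precisely when $\mathcal{L}$ is a wandering subspace of the Cayley transform $U=(A+iI)(A-iI)^{-1}$. So the statement becomes: \emph{$U$ has a nontrivial wandering subspace if and only if $A$ has a reducing subspace $\mathfrak{H}_0$ on which the restriction of $A$ has Lebesgue spectrum on $\mathbb{R}$.}

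Next I would translate the spectral condition through the Cayley transform. If $\mathcal{L}\ne\{0\}$ is wandering for $U$, then $\mathfrak{H}_0:=\overline{\operatorname{span}}\{U^n\mathcal{L}:n\in\mathbb{Z}\}$ reduces $U$, and $U\upharpoonright_{\mathfrak{H}_0}$ is (unitarily equivalent to) the bilateral shift with multiplicity $\dim\mathcal{L}$ — this is precisely the Wold-type decomposition for a \emph{two-sided} wandering subspace, equivalently the computation already done in the proof of Theorem \ref{assa108} showing $(E_\lambda f,f)=\tfrac{\lambda}{2\pi}\|f\|^2$ for $f\in\mathcal{L}$. A bilateral shift has Lebesgue (a.c., uniform multiplicity) spectrum on the unit circle, so via $A=i(U+I)(U-I)^{-1}$ the operator $A_0=A\upharpoonright_{\mathcal{D}(A)\cap\mathfrak{H}_0}$ is self-adjoint with Lebesgue spectrum on $\mathbb{R}$ — here one should check that the point $1\in\mathbb{T}$ (corresponding to $\infty$) carries no mass, which again follows from the explicit spectral measure $\tfrac{1}{2\pi}d\lambda$ on $\mathcal{L}$, so the inverse Cayley transform produces a genuine self-adjoint operator on all of $\mathfrak{H}_0$. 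Conversely, if $A$ has a reducing subspace $\mathfrak{H}_0$ with $A_0=A\upharpoonright_{\mathfrak{H}_0}$ of Lebesgue spectrum on $\mathbb{R}$, then $U_0=U\upharpoonright_{\mathfrak{H}_0}$ is a unitary operator with Lebesgue spectrum on $\mathbb{T}$, hence unitarily equivalent to a bilateral shift of some multiplicity $m\ge 1$; such a shift visibly has a wandering subspace (the defining generating subspace), and transporting it back gives a nontrivial wandering subspace of $U$ inside $\mathfrak{H}_0$. This proves the first assertion.

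For the second assertion I would argue: a \emph{simple} PSO is, by Remark \ref{bbb21}, unitarily equivalent to $\textsc{S}$ in \eqref{assa7}, whose defect subspaces span the whole space; equivalently, in the $S_{\mathcal{L}}$ picture, simplicity of $S_{\mathcal{L}}$ means the wandering subspace $\mathcal{L}$ is \emph{generating} for $U$, i.e. $\overline{\operatorname{span}}\{U^n\mathcal{L}:n\in\mathbb{Z}\}=\mathfrak{H}$. Thus $A$ admits a simple PSO restriction iff $U$ itself (on all of $\mathfrak{H}$) is a bilateral shift, which is exactly the condition that $A$ has Lebesgue spectrum on $\mathbb{R}$ — using once more that the generating-wandering-subspace computation rules out any mass at the Cayley-exceptional point, so no domain pathology arises.

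The main obstacle I anticipate is the careful bookkeeping around the exceptional point of the Cayley transform: asserting ``$U\upharpoonright_{\mathfrak{H}_0}$ is a bilateral shift'' $\Leftrightarrow$ ``$A\upharpoonright_{\mathfrak{H}_0}$ is self-adjoint with Lebesgue spectrum'' requires knowing that the spectral measure of $U$ on $\mathfrak{H}_0$ is mutually absolutely continuous with arclength \emph{and} assigns zero mass to $\{1\}$; the first direction gets this for free from the explicit $\tfrac{1}{2\pi}d\lambda$ already computed in Theorem \ref{assa108}, but for the converse one must invoke that a unitary with purely Lebesgue spectrum on $\mathbb{T}$ is unitarily equivalent to a bilateral shift (a standard fact, e.g. via the spectral theorem and $L^2$ Fourier expansion), and that this equivalence is compatible with undoing the Cayley transform on all of $\mathfrak{H}_0$. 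Everything else is then routine assembly using Theorem \ref{assa108} and Corollary \ref{bbb111}.
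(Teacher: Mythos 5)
Your proposal is correct and follows essentially the same route as the paper: reduce to the wandering-subspace criterion of Theorem \ref{assa108}, set $\mathfrak{H}_0=\sum_{n\in\mathbb{Z}}\oplus\, U^n\mathcal{L}$ so that $U\upharpoonright_{\mathfrak{H}_0}$ is a bilateral shift whose inverse Cayley transform has Lebesgue spectrum, and identify simplicity of $S_{\mathcal{L}}$ with $\mathcal{L}$ being generating (the paper phrases this via $\mathfrak{H}_3=\{0\}$ in Corollary \ref{bbb111}). The only difference is that you spell out the converse direction and the absence of spectral mass at the exceptional point $1$ of the Cayley transform, which the paper leaves implicit.
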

\begin{proof}
Due to Theorem \ref{assa108}, a PSO can appear only in the case where
there exists a wandering subspace $\mathcal{L}$ of $U$.
Denote $\mathfrak{H}_0=\sum_{n\in\mathbb{Z}}\oplus{U^n}\mathcal{L}.$
The operator $U_0=U\upharpoonright_{\mathfrak{H}_0}$  is a bilateral shift in $\mathfrak{H}_0$ and
its Cayley transform $A_0$ has Lebesgue spectrum on $\mathbb{R}$.

According to Corollary \ref{bbb111},  the existence of a simple PSO means that $\mathfrak{H}_3=\{0\}$
in the decomposition \eqref{assa10}. Therefore,  the corresponding wandering subspace $\mathcal{L}$
(which determines a simple PSO with the help of \eqref{new19}) has the property that
$\mathfrak{H}=\sum_{n\in\mathbb{Z}}\oplus{U^n}\mathcal{L}$. This means that
$A$ has Lebesgue spectrum on $\mathbb{R}$.
\end{proof}

\begin{corollary}
Let  $W_t=e^{iAt}$  be the group of unitary operators generated by a self-adjoint operator $A$.
If there exists  a nonzero $h\in\mathfrak{H}$  such that
$$
(W_th, h)=0, \quad \mbox{for all} \quad  |t|>c,
$$
then the set of symmetric restrictions of $A$ contains a PSO.
\end{corollary}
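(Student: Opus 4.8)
The plan is to deduce this from Corollary~\ref{bbb27}: it suffices to exhibit a reducing subspace $\mathfrak{H}_0$ of $A$ on which the restriction of $A$ is a self-adjoint operator with Lebesgue spectrum on $\mathbb{R}$. The natural choice is the closed cyclic subspace $\mathfrak{H}_0$ generated by $h$ under the group $\{W_t\}$ (equivalently, under the bounded functions of $A$); it is invariant under every $W_s$, hence under the von Neumann algebra generated by the spectral projections of $A$, so it reduces $A$ and has $h$ as a cyclic vector. In the spectral model of $A_0=A\upharpoonright_{\mathcal{D}(A)\cap\mathfrak{H}_0}$ this operator is unitarily equivalent to multiplication by the independent variable in $L_2(\mathbb{R},\mu_h)$, where $\mu_h(\cdot)=(E_{(\cdot)}h,h)$ is the spectral measure of $A$ at $h$ and $E_\lambda$ is the spectral resolution of $A$. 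Thus the whole question reduces to identifying the measure $\mu_h$.

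The key point is that the hypothesis says precisely that the Fourier transform $\widehat{\mu_h}(t)=\int_{\mathbb{R}}e^{i\lambda t}\,d\mu_h(\lambda)=(W_th,h)$ is supported in $[-c,c]$. Since $\widehat{\mu_h}$ is continuous with $|\widehat{\mu_h}(t)|\le\|h\|^2$, it lies in $L_1(\mathbb{R})$; Fourier inversion then shows that $\mu_h$ is absolutely continuous with respect to Lebesgue measure, with density $f(\lambda)=\frac{1}{2\pi}\int_{-c}^{c}(W_th,h)\,e^{-i\lambda t}\,dt$, and by the Paley--Wiener theorem $f$ is the restriction to $\mathbb{R}$ of an entire function of exponential type at most $c$. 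Since $\int_{\mathbb{R}}f\,d\lambda=\|h\|^2\neq 0$ we have $f\not\equiv 0$, and a nontrivial entire function of exponential type has only isolated real zeros; hence $f(\lambda)>0$ for all but countably many $\lambda\in\mathbb{R}$. Therefore $\mu_h$ is mutually absolutely continuous with Lebesgue measure on $\mathbb{R}$ and has no singular part, so $A_0$ has Lebesgue spectrum (of multiplicity one) on $\mathbb{R}$, and Corollary~\ref{bbb27} yields a PSO among the symmetric restrictions of $A$.

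Equivalently, one can run the final step through the Cayley transform: under $\lambda\mapsto(\lambda+i)(\lambda-i)^{-1}$ the operator $U\upharpoonright_{\mathfrak{H}_0}$, with $U$ as in \eqref{new534}, becomes multiplication by $e^{i\theta}$ on a weighted space $L_2(\mathbb{T},\widetilde f\,d\theta)$ with $\widetilde f>0$ a.e., which via $g\mapsto\widetilde f^{1/2}g$ is unitarily equivalent to multiplication by $e^{i\theta}$ on $L_2(\mathbb{T},d\theta)$, i.e.\ a bilateral shift whose wandering subspace is the constants. Transporting this wandering subspace back to $\mathfrak{H}_0$ and feeding it into \eqref{new19} produces the PSO explicitly, in the spirit of Theorem~\ref{assa108}.

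I expect the only genuinely delicate step to be the implication ``$\widehat{\mu_h}$ compactly supported $\Rightarrow$ $\mu_h$ equivalent to Lebesgue measure''. Absolute continuity is automatic once $\widehat{\mu_h}\in L_1(\mathbb{R})$, but the positivity of the density almost everywhere really requires the analytic input that $f$ extends to a nonzero entire function of exponential type, so that its real zero set is discrete rather than merely Lebesgue-null. Everything else---that the cyclic subspace reduces $A$, the spectral model of $A_0$, and the reduction to Corollary~\ref{bbb27}---is routine.
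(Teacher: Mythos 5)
Your proposal is correct, and its overall skeleton is exactly that of the paper: pass to the closed linear span $\mathfrak{H}_0$ of $\{W_th\}$, observe that it reduces $A$, show that $A_0=A\upharpoonright_{\mathcal{D}(A)\cap\mathfrak{H}_0}$ has Lebesgue spectrum on $\mathbb{R}$, and invoke Corollary~\ref{bbb27}. The difference is in how the middle step is handled. The paper treats it as a black box: it cites Lemma~1.2 of \cite{Sinaj}, which asserts that the orthogonality condition $(W_th,h)=0$ for $|t|>c$ forces $W_t\upharpoonright_{\mathfrak{H}_0}$ to be a bilateral shift, whence its generator has Lebesgue spectrum. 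You instead open the box and prove this directly: $\widehat{\mu_h}(t)=(W_th,h)$ is continuous, bounded, and compactly supported, hence $\mu_h$ is absolutely continuous with a continuous density $f$ that, by Paley--Wiener, extends to a nonzero entire function of exponential type; since the real zeros of such a function are isolated, $f>0$ off a Lebesgue-null set and $\mu_h$ is equivalent to Lebesgue measure, so $A_0$ is multiplication by the independent variable on $L_2(\mathbb{R},f\,d\lambda)\cong L_2(\mathbb{R},d\lambda)$. This buys a self-contained argument (and, via your Cayley-transform remark, an explicit wandering subspace feeding into Theorem~\ref{assa108}), at the cost of a few more lines; the paper's version buys brevity at the cost of an external reference. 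You correctly identify the one genuinely delicate point---that absolute continuity alone is not enough and the entire-function argument is needed to rule out a density vanishing on a set of positive measure---which is precisely the content of the cited lemma.
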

\begin{proof}  Denote by $\mathfrak{H}_0$  the closure of linear span of $\{{W_t}h\}$  in $\mathfrak{H}$.
The subspace $\mathfrak{H}_0$  reduces $W_t$  and, by virtue of \cite[Lemma 1.2]{Sinaj},  the restriction $W_t\upharpoonright_{\mathfrak{H}_0}$ is a bilateral shift in $\mathfrak{H}_0$. Its generator $A_0$  is
a self-adjoint operator in   $\mathfrak{H}_0$ with the Lebesque spectrum. Applying now Corollary \ref{bbb27}
we complete the proof.
\end{proof}

The concept of Lebesgue spectrum on $\mathbb{R}$  for a self-adjoint operator $A$ can be defined in various (equivalent) ways which
guarantee that the spectral type of $A$ is equivalent to the Lebesgue one and the multiplicity of the spectrum $\sigma(A)$
does not change for any real point.  The last condition is obviously satisfied when $A$ is unitary equivalent to its shifts $A-tI$ for
any $t\in\mathbb{R}$. Development of this `translation-invariance' idea leads to the prominent Weyl commutation relation which
ensures the Lebesgue spectrum property of $A$. Namely, due to the von Neumann theorem \cite[p. 35]{LF},
 a self-adjoint operator $A$ in $\mathfrak{H}$  has
the Lebesgue spectrum on $\mathbb{R}$ if and only if there exists a strongly continuous group of unitary operators $V_t$ such that
\begin{equation}\label{bbb13}
V_tAV_{-t}=A-tI, \qquad \forall{t}\in\mathbb{R}.
\end{equation}

It should be mention that any simple PSO is also a solution of the Weyl commutation relation \eqref{bbb13}.
Indeed,  any operator $A$ which is the solution of \eqref{bbb13} is determined up to unitary equivalence.
Therefore, it is sufficient  to consider the simple PSO  $\textsc{S}$ defined by \eqref{assa7} in $L_2(\mathbb{R}, N)$ and
 to verify that $A=\textsc{S}$  is a solution of \eqref{bbb13} with $V_t$ acting as the multiplication on $e^{-itx}$.

\section{Examples of PSO}

{\bf I.}  Let  $S_{\mathcal{L}}$  be a PSO that is determined by \eqref{new19} as the restriction of
a self-adjoint operator $A$.  Consider a unitary operator $W$  that commutes with $A$.
It is easy to see  that  $S'=WS_{\mathcal{L}}W^{-1}$  is also PSO and $S'$  is determined  by
\eqref{new19} with the new wandering subspace $\mathcal{L}'=W\mathcal{L}$,  i.e.,
$S'=S_{W\mathcal{L}}$.  This simple observation gives rise to infinitely many PSO
 which are symmetric restrictions of a given self-adjoint operator $A$.

If  $A$ has Lebesgue spectrum on $\mathbb{R}$ and its multiplicity coincides with $\dim\mathcal{L}$,
then the obtained  PSO is simple. Furthermore,
 the space $\mathfrak{H}$ is presented as  $\mathfrak{H}=\sum_{n\in\mathbb{Z}}\oplus{U^n{\mathcal{L}}}$.
The last decomposition allows one to determine a unitary mapping of
 $\mathfrak{H}$  onto   $L_2(\mathbb{R}, {\mathcal L})$ in such a way that
$A$ corresponds to the multiplication by independent variable:  $Af(\delta)=\delta{f(\delta)}$;
the Cayley transform of $A$ acts as the multiplication operator:  $Uf(\delta)=\frac{\delta+iI}{\delta-iI}f(\delta)$;
and the wandering subspace  $\mathcal L$  (in $\mathfrak{H}$)  is mapped onto the wandering
subspace $\frac{1}{\delta+i}{\mathcal L}$  in $L_2(\mathbb{R}, {\mathcal L})$ \cite[Chapter 2]{LF}.

If $W$ is a unitary operator in $L_2(\mathbb{R}, {\mathcal L})$  that commutes with $A$, then $W$
can be realized as a multiplicative operator-valued function $w(\delta)$ on ${\mathcal L}$ into ${\mathcal L}$
which is unitary for almost all $\delta$ (see, e.g.,  \cite[Corollary 4.2, p.53]{LF}) :
$$
Wf=w(\delta)f(\delta), \qquad  f\in{L_2(\mathbb{R}, {\mathcal L})}.
$$
This means that the subspaces
${\mathcal L}_{w}\equiv{W}\frac{1}{\delta+i}{\mathcal L}=\frac{w(\delta)}{\delta+i}{\mathcal L}$ are wandering in ${L_2(\mathbb{R}, {\mathcal L})}$
and they determines infinitely many simple PSO $S_{w}$, which, due to \eqref{new19}, are  restrictions of the operator $A$ of multiplication by
$\delta$ onto linear manifolds
$$
\mathcal{D}(S_{w})=\{u\in{\mathcal{D}(A)} :  \int_{\mathbb{R}} (u(\delta), w(\delta)v)_{\mathcal{L}}d\delta=0, \ \forall{v}\in{\mathcal{L}} \}.
$$

 This result can be reformulated for the restrictions of self-adjoint momentum operator  $\textsc{A}$
 (see \eqref{assa101b} where $\mathcal{L}=N$)
with the use of Fourier transformation $(Ff)(x)=\frac{1}{\sqrt{2\pi}}\int_{\mathbb{R}}e^{-i\delta{x}}f(\delta)d\delta$ that
relates $Af(\delta)=\delta{f(\delta)}$ and $\textsc{A}f=i\frac{df}{dx}$. Taking into account that
$\textsc{A}F=FA$   we decide that the subspaces $F{\mathcal L}_{w}$ are wandering for the Cayley
transform of $\textsc{A}$ in $L_2(\mathbb{R}, \mathcal{L})$.
Simple PSO $\textsc{S}_{w}$ are the restrictions of  $\textsc{A}$ onto those functions $f\in{W^1_2(\mathbb{R}, \mathcal{L})}$  that satisfy the relation (c.f. \eqref{new19}):
$((\textsc{A}-iI)u, \gamma)=0, \  \forall{\gamma}\in{F{\mathcal L}_{w}}.$  It is clear that
$\textsc{S}_{w}=FS_{w}F^{-1}$.

Let us set $w(\delta)\equiv{1}$. Then the wandering subspace $F{\mathcal L}_w$ coincides with the subspace
$\chi_{\mathbb{R}_+}(x)e^{-x}{\mathcal L}$ and the formula \eqref{new19} leads, to the simple PSO $\textsc{S}$
defined by \eqref{assa7}. The operator $\textsc{S}$ is the result of one-point perturbation of the momentum operator
$\textsc{A}$ supported at point $x=0$.  The symmetric operator
\begin{equation}\label{neww12}
\textsc{S}_{w}=i\frac{d}{dx}, \qquad \mathcal{D}(\textsc{S}_w)=\{u\in{W^1_2(\mathbb{R}, \mathcal{L})} : u(y)=0\}
\end{equation}
corresponding to one-point interaction supported at  real point $x=y$ is deduced from the formulas
above with  $w(\delta)=e^{i\delta{y}}$.

Assume now that $w(\delta)=\frac{\delta+iI}{\delta-iI}$.  Then
$F{\mathcal L}_w=F\left(\frac{1}{\delta-iI}{\mathcal L}\right)=\chi_{\mathbb{R}_-}(x)e^{x}{\mathcal L}$ and
the formula \eqref{new19} gives rise to the simple PSO
\begin{equation}\label{new1w}
\textsc{S}_{w}=i\frac{d}{dx}, \qquad \mathcal{D}(\textsc{S}_w)=\{u\in{W^1_2(\mathbb{R}, {\mathcal L})} :  u(0)=2\int_{-\infty}^{0}u(x)e^xdx \},
\end{equation}
which is an example of nonlocal point interaction of the momentum operator $\textsc{A}$.

{\bf II.} Let $Df=\sqrt{2}f(2x)$ and $Tf=f(x-1)$ be the dilation and the translation operators in $L_2(\mathbb{R})$.
Denote  $A=i(D+I)(D-I)^{-1}$.  The operator $A$ is self-adjoint in $L_2(\mathbb{R})$ and
it has Lebesgue spectrum on $\mathbb{R}$ (since $D$ is a bilateral shift).
\begin{proposition}\label{neww35}
Let  $S$ be a simple PSO that is a restriction of
the self-adjoint operator $A$ and let  $\psi\in\mathfrak{N}_{-i}=\ker(S^*+iI)$ be a function such that $\{T^{k}\psi\}_{k\in\mathbb{Z}}$
is an orthonormal basis of $\mathfrak{N}_{-i}$. Then $\psi$ is a wavelet.
\end{proposition}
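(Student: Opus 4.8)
The plan is to reduce the claim to the definition of an orthonormal wavelet, namely that $\psi\in L_2(\mathbb{R})$ is a wavelet exactly when the dyadic system $\{D^jT^k\psi\}_{j,k\in\mathbb{Z}}$, with $D^jT^k\psi(x)=2^{j/2}\psi(2^jx-k)$, is an orthonormal basis of $L_2(\mathbb{R})$. Thus the task is to manufacture this orthonormal basis out of the two hypotheses: that $S$ is a \emph{simple} PSO which is a restriction of $A$, and that $\{T^k\psi\}_{k\in\mathbb{Z}}$ is an orthonormal basis of $\mathfrak{N}_{-i}$.

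First I would identify the Cayley transform $U$ of $A$. From $A=i(D+I)(D-I)^{-1}$ and the functional-calculus relation between a unitary operator and its inverse Cayley transform (cf.\ \eqref{new534}), one reads off $U=(A+iI)(A-iI)^{-1}=D$, the dilation operator. Since $S$ is a closed densely defined symmetric restriction of $A$, the lemma preceding Theorem~\ref{assa108} gives $S=S_{\mathcal{L}}$ with $\mathcal{L}=\mathfrak{H}\ominus\mathcal{R}(S-iI)=\ker(S^*+iI)=\mathfrak{N}_{-i}$; in particular the defect subspace $\mathfrak{N}_{-i}$ in which $\psi$ lives is exactly the subspace $\mathcal{L}$ appearing in \eqref{new19}. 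Because $S$ is a PSO, Theorem~\ref{assa108} then shows that $\mathfrak{N}_{-i}$ is a wandering subspace of $D$; and because $S$ is moreover \emph{simple}, the decomposition $\mathfrak{H}=\sum_{n\in\mathbb{Z}}\oplus U^n\mathcal{L}$ obtained in the proof of Corollary~\ref{bbb27} yields the orthogonal decomposition $L_2(\mathbb{R})=\bigoplus_{n\in\mathbb{Z}}D^n\mathfrak{N}_{-i}$ (which is also the precise meaning of ``$A$ has Lebesgue spectrum on $\mathbb{R}$'' with multiplicity $\dim\mathfrak{N}_{-i}$).

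It then remains to assemble the basis. Since $D$ is unitary and $\{T^k\psi\}_{k\in\mathbb{Z}}$ is an orthonormal basis of $\mathfrak{N}_{-i}$, for every fixed $n\in\mathbb{Z}$ the family $\{D^nT^k\psi\}_{k\in\mathbb{Z}}$ is an orthonormal basis of $D^n\mathfrak{N}_{-i}$. The subspaces $D^n\mathfrak{N}_{-i}$ are pairwise orthogonal (wandering subspace) and their orthogonal sum is all of $L_2(\mathbb{R})$, so the union $\{D^nT^k\psi\}_{n,k\in\mathbb{Z}}$ is an orthonormal basis of $L_2(\mathbb{R})$; recalling $D^nT^k\psi(x)=2^{n/2}\psi(2^nx-k)$, this says precisely that $\psi$ is a wavelet. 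I do not anticipate a real obstacle in this argument: the only steps needing a line of care are the identification $\mathcal{L}=\mathfrak{N}_{-i}$ (immediate from \eqref{new19}, and already used in the proof of Theorem~\ref{assa108}) and the passage, under simplicity of $S$, from Lebesgue spectrum of $A$ to the explicit decomposition $L_2(\mathbb{R})=\bigoplus_n D^n\mathfrak{N}_{-i}$ (the content of the second paragraph of the proof of Corollary~\ref{bbb27}). With these recorded, the proposition is just a bookkeeping of orthonormal bases.
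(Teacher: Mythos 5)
Your proposal is correct and follows essentially the same route as the paper's proof: identify $\mathcal{L}=\mathfrak{N}_{-i}$ via \eqref{new19}, invoke Theorem \ref{assa108} to see that $\mathfrak{N}_{-i}$ is wandering for the dilation $D$ (the Cayley transform of $A$), use simplicity to get $L_2(\mathbb{R})=\sum_{n\in\mathbb{Z}}\oplus D^n\mathfrak{N}_{-i}$, and assemble $\{D^nT^k\psi\}_{n,k\in\mathbb{Z}}$ into an orthonormal basis. You merely spell out the bookkeeping steps the paper leaves implicit, so nothing further is needed.
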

\begin{proof}
If $S$ is a restriction of $A$, then $S$ is determined by \eqref{new19}, i.e.,
$S=S_{\mathcal{L}}$, where $\mathcal{L}=\mathfrak{N}_{-i}$.  Since
$S$ is assumed to be PSO, Theorem \ref{assa108}  implies that $\mathcal{L}$ is a wandering subspace
for the dilation operator $D$. Moreover, the simplicity of $S$ means that
$L_2(\mathbb{R})=\sum_{n\in\mathbb{Z}}\oplus{D^j{\mathcal{L}}}$.
Therefore,  $\{D^jT^k\psi\}_{j, k\in\mathbb{Z}}$ is an orthonormal basis of $L_2(\mathbb{R})$,
i.e., $\psi$ is a wavelet \cite{CHR}.
\end{proof}

\section{Nonlocal point interactions}
The above results show that one-point interaction of the momentum operator:
 $\textsc{A}+\alpha\delta(x-y)$
leads to self-adjoint operators  which are unitary equivalent to each other and have Lebesgue spectrum on $\mathbb{R}$.
This means that non-trivial spectral properties of
self-adjoint operators associated with the momentum operator should be obtained
with the help of more complicated  perturbations. In the present section we consider
special classes of general nonlocal one point interactions \cite{Nizhnik} which can be characterized as
one point interaction defined by the nonlocal potential $\gamma(x)\in{L_2(\mathbb{R})}$.

{\bf I.}  Let us consider  the maximal operator $S_{max}$  which  is  determined on $W^{1}_2(\mathbb{R}\setminus\{0\})$  by the differential expression
$$
S_{max}f=i\frac{df}{dx}+\gamma(x)f_r  \quad  (x\not=0),  \quad  f_r=\frac{1}{2}(f(0+)+f(0-)),
$$
where the non-local potential $\gamma(x)$ belongs to $L_2(\mathbb{R})$.
Direct calculation shows that, for all $f, g\in\mathcal{D}(S_{max})=W^{1}_2(\mathbb{R}\setminus\{0\})$,
$$
(S_{max}f, g)-(f, S_{max}g)=i[\Gamma_{+}f\overline{\Gamma_{+}g}
-\Gamma_{-}f \overline{\Gamma_{-}g}],
$$
where  ${\Gamma}_{\pm}$ are determined by
\begin{equation}\label{new59}
\Gamma_{+}f=f(0-)+\frac{i}{2}(f, \gamma),  \qquad \Gamma_{-}f=f(0+)-\frac{i}{2}(f, \gamma).
\end{equation}
\begin{lemma}\label{assa67}
The operator
$$
 S_{min}=S_{max}\upharpoonright_{\mathcal{D}(S_{min})},  \qquad  \mathcal{D}(S_{min})=\ker\Gamma_-\cap\ker\Gamma_+
 $$
is a closed densely defined symmetric operator in $L_2(\mathbb{R})$ and such that
$S_{min}^*=S_{max}$. A triplet $(\mathbb{C}, \Gamma_{-},  \Gamma_{+})$ , where
the linear mappings ${\Gamma}_{\pm} : W^{1}_2(\mathbb{R}\setminus\{0\})  \to \mathbb{C}$  are determined by
\eqref{new59}  is a boundary triplet of $S_{max}$.
 \end{lemma}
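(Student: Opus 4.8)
The plan is to verify the two defining properties of a boundary triplet for $(\mathbb{C},\Gamma_-,\Gamma_+)$ and to deduce the structural claims about $S_{min}$ from them. The Green identity, which is precisely \eqref{assa5} with $\mathcal{H}=\mathbb{C}$, is the displayed computation immediately preceding the statement, so nothing remains to be done there. For the joint surjectivity of $(\Gamma_-,\Gamma_+)\colon W^1_2(\mathbb{R}\setminus\{0\})\to\mathbb{C}^2$ I would fix a small $\delta>0$ and pick $e_+,e_-\in W^1_2(\mathbb{R}\setminus\{0\})$ supported in $(-\delta,\delta)\setminus\{0\}$ with $e_+(0+)=e_-(0-)=1$ and $e_+(0-)=e_-(0+)=0$; in the basis $\{e_+,e_-\}$ the restriction of $(\Gamma_-,\Gamma_+)$ to $\mathrm{span}\{e_+,e_-\}$ is represented by a $2\times2$ matrix that differs from the identity only by entries proportional to $(e_\pm,\gamma)$, and since $\gamma\in L_2(\mathbb{R})$ these scalar products tend to $0$ as $\delta\downarrow0$. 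Hence the matrix is invertible for $\delta$ small and $(\Gamma_-,\Gamma_+)$ is onto.

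Next I would record two structural facts. First, $S_{max}=\textsc{S}^{*}+B$, where $\textsc{S}$ is the momentum operator \eqref{assa7} with $N=\mathbb{C}$ and $Bf=\gamma f_r$; the elementary estimate $|f(0\pm)|^2\le 2\|f\|\,\|f'\|$ shows that $B$ is $\textsc{S}^{*}$-bounded with relative bound $0$, so $S_{max}$ is closed and its graph norm is equivalent to the norm of $W^1_2(\mathbb{R}\setminus\{0\})$. In particular the functionals $f\mapsto f(0\pm)$ and $f\mapsto(f,\gamma)$, and therefore $\Gamma_\pm$, are continuous on $(\mathcal{D}(S_{max}),\text{graph norm})$, which makes $\mathcal{D}(S_{min})=\ker\Gamma_-\cap\ker\Gamma_+$ closed there, so $S_{min}$ is a closed operator. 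Second, adding the two equations $\Gamma_\pm f=0$ gives $f(0+)+f(0-)=0$, i.e.\ $f_r=0$, so on $\mathcal{D}(S_{min})$ the nonlocal term drops out and $S_{min}$ acts simply as $i\,d/dx$; the symmetry of $S_{min}$ is then immediate from the Green identity, both boundary terms vanishing. For density of $\mathcal{D}(S_{min})$ I would approximate $h\in L_2(\mathbb{R})$ by $f_0\in C^\infty_c(\mathbb{R}\setminus\{0\})$ and add a correction $c\,\psi_\delta$ with $\psi_\delta$ supported in $(-\delta,\delta)\setminus\{0\}$, $\psi_\delta(0\pm)=\pm1$ and $\|\psi_\delta\|_{L_2}\to0$; the choice $\psi_\delta(0\pm)=\pm1$ makes the constraint $f(0+)+f(0-)=0$ automatic, and the (bounded) scalar $c$ can be chosen so that the remaining constraint $f(0+)-f(0-)=i(f,\gamma)$ holds, giving $f_0+c\psi_\delta\in\mathcal{D}(S_{min})$ arbitrarily close to $h$.

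The final step is the identification $S_{min}^{*}=S_{max}$, after which $(\mathbb{C},\Gamma_-,\Gamma_+)$ is, by the definition in Section~\ref{chap2}, a boundary triplet of $S_{max}=S_{min}^{*}$. The inclusion $S_{max}\subseteq S_{min}^{*}$ is read off the Green identity with $f\in\mathcal{D}(S_{max})$ and $g\in\mathcal{D}(S_{min})$. For the converse, given $g\in\mathcal{D}(S_{min}^{*})$ I would first test against $f\in C^\infty_c(\mathbb{R}\setminus\{0\})$ with $(f,\gamma)=0$ (these lie in $\mathcal{D}(S_{min})$): this shows $ig'\in L_2(\mathbb{R})$, hence $g\in W^1_2(\mathbb{R}\setminus\{0\})$, and that $ig'-S_{min}^{*}g$ is a constant multiple $c\gamma$ of $\gamma$; then, testing against an $f\in\mathcal{D}(S_{min})$ with $(f,\gamma)\ne0$ and integrating by parts on $\mathbb{R}_-$ and $\mathbb{R}_+$ separately, the boundary terms collapse to $(f,\gamma)\overline{g_r}$ and force $c=-g_r$, whence $S_{min}^{*}g=ig'+\gamma g_r=S_{max}g$. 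I expect this identification (and, for the same underlying reason, the density of $\mathcal{D}(S_{min})$) to be the one genuinely delicate point, precisely because the constraint cutting out $\mathcal{D}(S_{min})$ intertwines the boundary values $f(0\pm)$ with the nonlocal functional $(f,\gamma)$, so the classical description of the momentum operator is not directly available; a shorter alternative is to quote the general boundary-triplet construction from the maximal operator \cite{KK, Gor, Schm}, which delivers all the stated conclusions once the Green identity, the surjectivity of $(\Gamma_-,\Gamma_+)$ and the closedness of $S_{max}$ are in hand.
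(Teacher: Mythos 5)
Your proposal is correct, but it takes a genuinely different (and much longer) route than the paper: in effect you carry out by hand exactly what the ``shorter alternative'' in your last sentence delivers abstractly, and that alternative is what the paper actually does. The paper's proof consists of two checks only: surjectivity of $(\Gamma_-,\Gamma_+):\mathcal{D}(S_{max})\to\mathbb{C}^2$, and the existence of a unimodular $c$ for which $S_{max}\upharpoonright_{\ker(c\Gamma_+-\Gamma_-)}$ is self-adjoint; given the Green identity, \cite[Corollary 2.5]{Behrndt} (via Remark \ref{rem22}) then yields all remaining assertions of the lemma --- closedness and density of $\mathcal{D}(S_{min})$, symmetry, and $S_{min}^*=S_{max}$ --- for free. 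For the second check the paper takes $c=-1$, so that $f(0+)=-f(0-)$, hence $f_r=0$ and the nonlocal term drops out, leaving a standard self-adjoint realization of $i\,d/dx$; note that your route never needs to exhibit such a self-adjoint restriction. For surjectivity the paper first prescribes $f(0\pm)$ and then subtracts $\frac{(f,\gamma)}{(u,\gamma)}u$ with $u(0\pm)=0$, $(u,\gamma)\neq0$, to annihilate the nonlocal functional, whereas you shrink the supports of two bump functions so that the representing $2\times2$ matrix is a small perturbation of the identity; both arguments are valid. What your approach buys is independence from the Behrndt--Langer criterion together with explicit proofs of closedness (relative boundedness of $f\mapsto\gamma f_r$), density (the $f_0+c\psi_\delta$ correction), and the identification $S_{min}^*g=ig'+\gamma g_r$ (your boundary-term computation $(f,\gamma)\overline{g_r}$ and the conclusion $c=-g_r$ check out); the only point worth flagging is that choosing $f_1\in C^\infty_c(\mathbb{R}\setminus\{0\})$ with $(f_1,\gamma)\neq0$ presumes $\gamma\neq0$, the case $\gamma=0$ being classical. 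What the paper's approach buys is a three-line proof.
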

\begin{proof}
To complete the proof, by virtue of \cite[Corollary 2.5]{Behrndt} and Remark \ref{rem22},  it is sufficient to verify that:
(i)  there is a unimodular $c$ such that
the operator $A=S_{max}\upharpoonright_{\ker(c\Gamma_+-\Gamma_-)}$
is self-adjoint in $L_2(\mathbb{R})$;  (ii) the map $(\Gamma_-, \Gamma_+) : \mathcal{D}(S_{max}) \to \mathbb{C}^2$ is surjective.

The condition (i) is satisfied if we choose $c=-1$. In this case $A=i\frac{d}{dx}$ with the domain
$\mathcal{D}(A)=\{f{\in}W^{1}_2(\mathbb{R}\setminus\{0\}) : f(0-)=-f(0+)\}$ is a self-adjoint operator.

Let $h=(h_1, h_2)$ be an arbitrary element of $\mathbb{C}^2$. There exists $f\in{W^{1}_2(\mathbb{R}\setminus\{0\})}$
such that $f(0-)=h_1$ and $f(0+)=h_2$. Let us fix $u\in{W^{1}_2(\mathbb{R}\setminus\{0\})}$  such that $u(0-)=u(0+)=0$ and
$(u, \gamma)\not=0$.
Using now \eqref{new59}  we decide that the vector $\widetilde{f}=f-\frac{(f, \gamma)}{(u, \gamma)}u$
solves the equation $(\Gamma_-, \Gamma_+)\widetilde{f}=(h_1, h_2)$, that justifies (ii).
\end{proof}

The boundary triplet  $(\mathbb{C}, \Gamma_{-},  \Gamma_{+})$  constructed  in  Lemma \ref{assa67}
allows us to determine self-adjoint operators
\begin{equation}\label{neww67}
{A}_\theta{f}=i\frac{df}{dx}+\gamma(x)f_r,   \quad  f\in\mathcal{D}(A_\theta)\subset{W^{1}_2(\mathbb{R}\setminus\{0\})},  \quad \theta\in[0, 2\pi)
\end{equation}
whose domains $\mathcal{D}(A_\theta)$ consist of all functions $f\in{W^{1}_2(\mathbb{R}\setminus\{0\})}$  that satisfy the
nonlocal boundary-value condition
$$
 e^{i\theta}[f(0-)+\frac{i}{2}(f, \gamma)]=f(0+)-\frac{i}{2}(f, \gamma).
$$
These operators are mathematical models of one point interaction defined by the nonlocal potential $\gamma(x)$.

Each operator $A_\theta$  is a self-adjoint extension of the symmetric operator
$S_{min}=S^*_{max}= i\frac{d}{dx}$   with
domain of definition
\begin{equation}\label{assa24}
\mathcal{D}(S_{min})=\left\{f\in{W}^{1}_2(\mathbb{R}\setminus\{0\}) \  : \  \begin{array}{c}
f(0-)+\frac{i}{2}(f, \gamma)=0  \\
f(0+)-\frac{i}{2}(f, \gamma)=0
\end{array} \right\}.
\end{equation}

The symmetric operator $S_{min}$ has defect numbers $<1,1>$ and its defect subspaces $\mathfrak{N}_\lambda$,
$\mathfrak{N}_\nu$ ($\lambda\in\mathbb{C}_+$,  $\nu\in\mathbb{C}_-$) coincide with the linear span of the vectors
$$
f_\lambda(x)=g^\lambda(x)-2[1+g^\lambda(0)]G^+_\lambda(x)  \quad  \mbox{and} \quad
f_\nu(x)=g^\nu(x)-2[1+g^\nu(0)]G^-_\nu(x),
$$
respectively. Here
\begin{equation}\label{neww73}
g^z=(\textsc{A}-z{I})^{-1}\gamma=\left\{\begin{array}{l}
ie^{-izx}\int_x^\infty{e^{iz\tau}}\gamma(\tau)d\tau, \quad  z\in\mathbb{C_+}   \smallskip \\
-ie^{-izx}\int^x_{-\infty}{e^{iz\tau}}\gamma(\tau)d\tau, \quad  z\in\mathbb{C_-}
\end{array}\right.
\end{equation}
and
$$
G^+_\lambda(x)=\left\{\begin{array}{ll}
0, & x>0 \\
e^{-i\lambda{x}}  & x<0,
\end{array}\right. \qquad  G^-_\nu(x)=\left\{\begin{array}{ll}
e^{-i\nu{x}}, & x>0 \\
0  & x<0,
\end{array}\right.
$$

By virtue of \eqref{new1w} and \eqref{new59},  the characteristic function $\Theta(\cdot)$ has the form
\begin{equation}\label{neww13}
\Theta(\lambda)=\frac{\Gamma_-{f_\lambda}}{\Gamma_+{f_\lambda}}=\frac{f_\lambda(0+)-\frac{i}{2}(f_\lambda, \gamma)}{f_\lambda(0-)+\frac{i}{2}(f_\lambda, \gamma)}=-I+\frac{2}{2+g^\lambda(0)-\frac{i}{2}(f_\lambda, \gamma)}.
\end{equation}

Let us consider a particular case assuming that  $\gamma=\alpha\chi_{\mathbb{R_+}}(x)e^{-x}$, $\alpha\in\mathbb{C}$.
Then
$$
g^\lambda(x)=\frac{i\alpha}{1-i\lambda}\left\{\begin{array}{l}
e^{-x}, \quad x>0 \\
e^{-i\lambda{x}}, \quad x<0
\end{array}\right.
$$
and
$$
g^\lambda(0)-\frac{i}{2}(f_\lambda, \gamma)=\frac{i\alpha}{1-i\lambda}(1-i\overline{\alpha}/4)
$$
Therefore, the characteristic function  \eqref{neww13} turns out to be a constant on $\mathbb{C}_+$ when $\beta=4i$.
In this case,  the symmetric operator $S_{min}$ in \eqref{assa24} is PSO and all its self-adjoint extensions \eqref{neww67}
have Lebesgue spectrum on $\mathbb{R}$.

{\bf II.}  Let  the maximal operator $S_{max}$  be determined by the differential expression
$$
S_{max}f=i\frac{df}{dx}+\gamma(x)f_s  \quad  (x\not=0),  \quad  f_s=f(0+)-f(0-),
$$
where the non-local potential $\gamma(x)$ belongs to $L_2(\mathbb{R})$.
Similarly to the previous case, the Green formula can be established
$$
(S_{max}f, g)-(f, S_{max}g)=i[\Gamma_{+}f\overline{\Gamma_{+}g}
-\Gamma_{-}f \overline{\Gamma_{-}g}],  \quad f, g\in\mathcal{D}(S_{max})=W^{1}_2(\mathbb{R}\setminus\{0\})
$$
where  $\Gamma_{+}f=f(0-)-i(f, \gamma)$  and $\Gamma_{-}f=f(0+)-i(f, \gamma)$.
The same arguments as in the proof of Lemma \ref{assa67} leads to the conclusion that $(\mathbb{C}, \Gamma_{-},  \Gamma_{+})$
is a boundary triplet of $S_{max}$ and the corresponding symmetric operator
 $S_{min}=S_{max}\upharpoonright_{\mathcal{D}(S_{min})}$,   $ \mathcal{D}(S_{min})=\ker\Gamma_-\cap\ker\Gamma_+$
has the form
\begin{equation}\label{neww69}
S_{min}= i\frac{d}{dx}, \quad
\mathcal{D}(S_{min})=\{f\in{W}^{1}_2(\mathbb{R}) \  : \  f(0)=i(f, \gamma)\}.
\end{equation}

Each self-adjoint extension  ${A}_\alpha$ of ${S}_{min}$  is determined by the formula
$$
{A}_\alpha{f}=i\frac{df}{dx}+\gamma(x)f_s,
$$
where  $
\mathcal{D}({A}_\theta)=\{f\in\mathcal{D}({S}_{max}) \ : \ e^{i\theta}[f(0-)-i(f, \gamma)]=f(0+)-i(f, \gamma) \}.$

The defect subspaces $\mathfrak{N}_\lambda$,
$\mathfrak{N}_\nu$ ($\lambda\in\mathbb{C}_+$,  $\nu\in\mathbb{C}_-$) of $S_{min}$  coincide with the linear span of vectors
$$
f_\lambda(x)=g^\lambda(x)+G^+_\lambda(x)  \quad  \mbox{and} \quad
f_\nu(x)=g^\nu(x)-G^-_\nu(x),
$$
respectively.
Let us fix  $\gamma=\alpha\chi_{\mathbb{R_-}}(x)e^{x}$ and
specify for which $\alpha\in\mathbb{C}$ the corresponding symmetric operator $S_{min}$ will be PSO.
It follows from \eqref{neww73} that
$$
g^\lambda(x)=\frac{i\alpha\chi_{\mathbb{R}_-}(x)}{1+i\lambda}(e^{-i\lambda{x}}-e^{x}),  \quad  g^\nu(x)=-\frac{i\alpha}{1+i\nu}\left\{\begin{array}{l}
e^{-i{\nu}x}, \quad x>0 \\
e^{x}, \quad x<0.
\end{array}\right.
$$
The obtained expressions  allows  one to calculate
$$
(f_\lambda, f_\nu)=\frac{\overline{\alpha}}{2(1-i\lambda)(1-i\overline{\nu})}(2i-\alpha),   \qquad \lambda\in\mathbb{C}_+, \quad  \nu\in\mathbb{C}_-.
$$
The obtained expression and Theorem \ref{assa77} mean that the symmetric operator $S_{min}$ defined in \eqref{neww69} is PSO if and only if $\alpha=2i$.  In this case,
the PSO $S_{min}$ coincides with the operator $\textsc{S}_{w}$  determined by \eqref{new1w}.

\bibliographystyle{amsplain}

\end{document}